\def\Aut{\operatorname{Aut}}
\def\SAut{\operatorname{SAut}}
\def\id{{\mathrm{id}}}
\def\ZZ{{\mathbb Z}}
\def\NN{{\mathbb N}}
\def\G{{\mathbb G}}
\def\AA{{\mathbb A}}
\def\K{{\mathbb K}}
\def\H{{\mathcal H}}
\def\0{\circ}
\def\K{F}
\def\H{\mathcal{H}}
\def\RB{\mathcal{R}}
\theoremstyle{plain}
\newtheorem{theorem}{Theorem}[section]
\newtheorem{lemma}[theorem]{Lemma}
\newtheorem{conjecture}[theorem]{Conjecture}
\newtheorem*{conjecture*}{Conjecture}
\newtheorem{question}[theorem]{Question}
\newtheorem{proposition}[theorem]{Proposition}
\newtheorem{corollary}[theorem]{Corollary}
\theoremstyle{definition}
\newtheorem{definition}[theorem]{Definition}
\newtheorem{example}[theorem]{Example}
\newtheorem{remark}[theorem]{Remark}
\newtheorem{sit}[theorem]{}
\begin{document}

\author{Vsevolod Gubarev}

\address{
Sobolev Institute of Mathematics \\
Acad. Koptyug ave. 4, 630090 Novosibirsk, Russia
}

\address{
Novosibirsk State University \\
Pirogova str. 2, 630090 Novosibirsk, Russia
}

\email{wsewolod89@gmail.com}

\author{Alexander Perepechko}
%\url{https://a.perep.ru/en/me}

\address{
  Kharkevich Institute for Information Transmission Problems\\
  19 Bolshoy Karetny per., 127994 Moscow, Russia
}

\address{
Moscow Institute of Physics and Technology (State University)\\
9 Institutskiy per., Dolgoprudny, Moscow Region, 141701, Russia 
}

\address{
National Research University Higher School of Economics\\
 20 Myasnitskaya ulitsa, Moscow 101000, Russia 
}

\email{a@perep.ru}

\sloppy
% \tolerance 1414
% \hbadness 1414
% \emergencystretch 1.5em
% \hfuzz 0.3pt
% \widowpenalty=10000
% \vfuzz \hfuzz
% \raggedbottom

\title{Injective Rota--Baxter operators\\ of weight zero on $F[x]$}

\thanks{Vsevolod Gubarev is supported by the Program of fundamental scientific researches of the Siberian Branch of Russian Academy of Sciences, I.1.1, project 0314-2019-0001.
The research of Alexander Perepechko was supported by the grant RSF-19-11-00172.}

\begin{abstract}
Rota--Baxter operators present a natural generalisation of integration by parts formula for the integral operator.
In 2015, Zheng, Guo, and Rosenkranz conjectured that every injective Rota--Baxter operator of weight zero on the polynomial algebra $\mathbb{R}[x]$
is a~composition of the multiplication by a~nonzero polynomial and a~formal integration at some point.
We confirm this conjecture over any field of characteristic zero.
Moreover, we establish a structure of an ind-variety on the moduli space of these operators and describe an additive structure of generic modality two on it. 
Finally, we provide an infinitely transitive action on codimension one subsets.
\end{abstract}

\keywords{Rota--Baxter operator, formal integration operator, polynomial algebra, additive action, infinite transitivity.}

\maketitle

\section{Introduction}

G. Baxter introduced the notion of a Rota--Baxter operator in 1960~\cite{Baxter}
as a natural generalization of  integration by parts formula for the integral operator.
Further, such operators were studied from algebraic, combinatorial, topological, physical 
and many other points of view by G.-C. Rota, P. Cartier, L. Guo and others, 
see details in~\cite{Unital,GuoMonograph}.

The integral operator is known to be injective on the algebra of continuous functions on~$\mathbb{R}$.
So, given an algebra, it is natural to classify injective Rota--Baxter operators of weight zero
on it. In~\cite{Unital}, it was proved that there are no 
injective Rota--Baxter operators of weight zero on a~unital finite-dimensional algebra.

The situation is completely different if we study Rota--Baxter operators of weight zero
on the polynomial algebra $F[x]$, where $F$~denotes the ground field of characteristic zero. 
In~\cite{Monom2}, S.~H.~Zheng, L.~Guo, and M.~Rosenkranz described all injective monomial Rota--Baxter operators of weight zero on $F[x]$; a~monomial operator is an operator which maps each monomial to a monomial with some coefficient.
More about monomial Rota--Baxter operators on polynomial algebras see in~\cite{MonomNonunital,Monom}. 
Representations of Rota--Baxter operators on $F[x]$ have been studied in~\cite{ReprRB,ReprRB2}.

A Rota--Baxter operator of weight zero on $F[x]$ is called {\it analytically modeled} if 
it equals a~composition of the multiplication $l_r$ by a fixed nonzero polynomial $r$
and a~formal integration $J_a$ at some fixed point~$a$. In~\cite{Monom2}, it was proved that up to a~constant term every injective Rota--Baxter operator on $F[x]$ acts on each monomial as an analytically modeled Rota--Baxter operator.
Based on this result, S.H.~Zheng, L.~Guo, and M.~Rosenkranz conjectured that every injective Rota--Baxter operator of weight zero on $\mathbb{R}[x]$ is analytically modeled.

Throughout the work, we operate over the ground field $F$ of characteristic zero. We confirm the conjecture of S.H.~Zheng, L.~Guo, and M.~Rosenkranz (Theorem~3.8).
This allows us to describe the moduli space $\RB$ of the injective Rota--Baxter operators of weight zero on $F[x]$ (Proposition~\ref{pr:moduli}),
which we do in terms of ind-varieties and ind-groups introduced by I.~Shafarevich in \cite{Sh}.

Furthermore, we introduce natural regular actions of the additive group $\G_a$ of the ground field. 
Given an affine variety $X$ and the group $\SAut(X)$ generated by all the additive actions on $X$, the $\SAut(X)$-action on $X$ is transitive if and only if it is infinitely transitive, see \cite{AFKKZ}.
Though $\RB$ is the quasi-affine infinite-dimensional ind-variety, we prove several results inspired by this equivalence.
Namely, the introduced actions provide an additive structure on $\RB$ of complexity two (Corollary~\ref{c:additive-Ra}), 
induce a transitive action on $\RB$ (Proposition~\ref{pr:tr}) 
and an infinitely transitive action on subsets of codimension one (Theorem~\ref{th:inf-tr}).
 
In Section 2 we state all required preliminaries.
In Section 3 we confirm the conjecture of S.H.~Zheng, L.~Guo, and M.~Rosenkranz over any field of characteristic zero.
Thus, we finish the description of all injective Rota--Baxter operators on $F[x]$ over a field of characteristic zero. 
In Section 4 we describe the structure of their moduli space $\RB$ in terms of ind-varieties, introduce families of $\G_a$-actions on~$\RB$ and describe their properties.

The authors are grateful to Ivan Arzhantsev for valuable remarks and suggestions.

\section{Preliminaries}

There are two principally different classes of Rota--Baxter operators: those of nonzero weight and those of weight zero. In this work we consider only the latter ones.

\begin{definition}
A linear operator $R$ defined on an algebra $A$
is called a \emph{Rota--Baxter operator of weight $\lambda$}, where $\lambda\in\K$, if the identity
\begin{equation}\label{RBlambda}
R(f)R(g) = R( R(f)g + fR(g) )+\lambda R(fg)
\end{equation}
holds for every $f,g\in A$.

We are interested only 
in \emph{injective Rota--Baxter operators of weight zero}. 
In particular, the identity \eqref{RBlambda} transforms into 
\begin{equation}\label{RB}
R(f)R(g) = R( R(f)g + fR(g) ).
\end{equation}
\end{definition}

\begin{example}\label{ex:non-inj}
A linear operator $R$ on $F[x]$  sending
$x^{2n} = 0$ into zero and $x^{2n+1}$ into $\frac{x^{2n+2}}{2n+2}$ for any $n\in\ZZ_{\ge0}$, is a nontrivial non-injective Rota--Baxter operator of weight zero.
\end{example}

Let us introduce the following linear operators on $F[x]$:
\begin{itemize}
    \item (multiplication) $l_r\colon f\mapsto rf$, where $r\in F[x]$; 
    \item (standard derivation) $\delta\colon x^n\mapsto nx^{n-1};$
    \item (formal integration at $a$) $J_a\colon x^n\mapsto\dfrac{x^{n+1}-a^
    {n+1}}{n+1}.$
\end{itemize}
Note that $\delta\circ J_a=\id$ and $J_a$ is a Rota--Baxter operator on $F[x]$ for any $a\in\K$.
We also denote by $I$ the operator $J_0$ for brevity.
By $\circ$ we denote a~composition of operators.

\begin{lemma}[\cite{Monom2}]\label{l:lr}
Let $r\in F[x]$ and let $R$ be a Rota--Baxter operator on $F[x]$.
Then a~linear operator $R\circ l_r$ is again a Rota--Baxter operator on $F[x]$.
\end{lemma}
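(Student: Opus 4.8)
The plan is to verify identity~\eqref{RB} for the composite $P \coloneq R \circ l_r$ by a direct computation, exploiting the commutativity of $F[x]$. Since $l_r\colon f\mapsto rf$, we have $P(f)=R(rf)$, and $P$ is linear as a composition of the linear maps $R$ and $l_r$. Thus the left-hand side of~\eqref{RB} for $P$ reads $P(f)P(g)=R(rf)\,R(rg)$.

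First I would apply the Rota--Baxter identity~\eqref{RB} for $R$ itself to the pair $(rf,\,rg)\in F[x]\times F[x]$, which gives
\[
R(rf)\,R(rg)=R\bigl(R(rf)\cdot rg + rf\cdot R(rg)\bigr).
\]
Next I would expand the right-hand side of~\eqref{RB} for $P$. As $P(f)g+fP(g)=R(rf)\,g+f\,R(rg)$, applying $P=R\circ l_r$ yields
\[
P\bigl(P(f)g+fP(g)\bigr)=R\bigl(r\,R(rf)\,g + r\,f\,R(rg)\bigr).
\]
Because $F[x]$ is commutative, the multiplier $r$ may be regrouped so that $r\,R(rf)\,g=R(rf)\cdot rg$ and $r\,f\,R(rg)=rf\cdot R(rg)$; hence the argument of the outer $R$ matches that in the previous display, and the two right-hand sides coincide. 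Comparing them gives $P(f)P(g)=P\bigl(P(f)g+fP(g)\bigr)$, which is precisely~\eqref{RB} for $P$.

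The verification is entirely formal, so I do not expect a substantive obstacle. The only point needing care is keeping track of the position of the multiplier $r$ inside the arguments of $R$, and this is settled at once by the commutativity of the polynomial algebra; no assumption on $R$ beyond the weight-zero Rota--Baxter identity is used.
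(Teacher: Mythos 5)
Your proof is correct: the direct verification via the Rota--Baxter identity applied to the pair $(rf,rg)$, together with commutativity of $F[x]$ to regroup the multiplier $r$, is exactly the standard argument. The paper itself states this lemma as a citation from \cite{Monom2} and gives no proof, so there is nothing to compare beyond noting that your computation is the expected one.
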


In~\cite{Monom2}, S.H. Zheng, L. Guo, and M. Rosenkranz initiated a~study of injective Rota--Baxter operators on $F[x]$. They described completely all injective monomial Rota--Baxter operators of weight zero on $F[x]$; 
an operator on $F[x]$ is called \emph{monomial} if it sends each monomial to a monomial with some coefficient. For the general case, the authors made a significant advance proving

\begin{theorem}[Zheng, Guo, Rosenkranz, 2015~\cite{Monom2}]\label{th:ZGR}
Let $R$ be an injective Rota--Baxter operator on $F[x]$,
where $F$ is a field of characteristic~$0$.
Then there exists a nonzero polynomial $r\in F[x]$ such that $\delta\circ R = l_r$.
\end{theorem}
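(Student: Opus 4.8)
The plan is to differentiate the Rota--Baxter identity, extract from it a rigid functional equation for the operator $T:=\delta\circ R$, and then pin $T$ down completely using injectivity together with a degree count. Applying $\delta$ to \eqref{RB} and using that $\delta$ is a derivation (Leibniz rule) together with the linearity of $T$, one obtains
\begin{equation*}
T(f)R(g)+R(f)T(g)=T\bigl(R(f)g\bigr)+T\bigl(fR(g)\bigr)\qquad\text{for all }f,g\in F[x].
\end{equation*}
This already shows that the theorem is consistent: if $\delta\circ R=l_r$, both sides reduce to $r\bigl(R(f)g+fR(g)\bigr)$. The goal is to prove that this is the only possibility.

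First I would show that $R(1)$ is nonconstant. Setting $f=g=1$ in \eqref{RB} gives $R(1)^2=2R\bigl(R(1)\bigr)$; were $R(1)$ a nonzero constant $c$, linearity and injectivity would force $c^2=2c^2$, hence $c=0$, contradicting injectivity. Thus $\deg R(1)\ge 1$, and I set $r:=\delta\bigl(R(1)\bigr)$, a nonzero polynomial. This is the only admissible candidate, since evaluating the desired identity $\delta\circ R=l_r$ at $f=1$ reads $r=\delta\bigl(R(1)\bigr)$. Writing $E:=\delta\circ R-l_r$, a direct substitution shows that $E$ is linear, satisfies $E(1)=0$, and obeys the \emph{same} equation as $T$, namely $E(f)R(g)+R(f)E(g)=E\bigl(R(f)g\bigr)+E\bigl(fR(g)\bigr)$; in particular $E(f)R(1)=E\bigl(R(f)\bigr)+E\bigl(fR(1)\bigr)$. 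The theorem becomes equivalent to the statement $E\equiv 0$.

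The core of the argument is a degree analysis. Using injectivity together with the consequence $R(f)^2=R\bigl(2fR(f)\bigr)$ of \eqref{RB}, I would prove that the degrees $d_n:=\deg R(x^n)$ are strictly increasing. Strict monotonicity is the decisive structural fact: it guarantees that the leading terms of $R(x^0),R(x^1),\dots$ never cancel, so $\deg R(f)=d_{\deg f}$ for every nonzero $f$ and $R$ acts predictably on degrees. Substituting this back into \eqref{RB} yields a recursion on the $d_n$ and on the leading coefficients of $R(x^n)$ forcing $d_n=n+\deg R(1)$ and matching the top coefficient of $\delta R(x^n)$ with that of $rx^n$, so that $\deg E(x^n)$ drops strictly below the naive bound. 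Feeding this into the homogeneous equation for $E$ and inducting on $n$ while tracking the leading coefficient then yields $E(x^n)=0$ for every $n$, i.e. $\delta\circ R=l_r$.

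The main obstacle is precisely this degree bookkeeping: establishing strict monotonicity of the $d_n$ and controlling the cancellations that can occur when $R$ is applied to $R(f)g+fR(g)$, whose two summands may a priori share the same top degree. Injectivity is exactly the hypothesis that rules out the degenerate scenarios (constant values or degree collapse), so the delicate part is to deploy it at each stage of the recursion; by contrast, the functional-equation manipulations with $T$ and $E$ are routine once the degrees and leading coefficients are under control.
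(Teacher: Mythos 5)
A preliminary remark on scope: the paper does not prove Theorem~\ref{th:ZGR} at all --- it is imported from Zheng, Guo and Rosenkranz~\cite{Monom2} and used as a black box (the paper's own contribution begins afterwards, with Proposition~\ref{pr:c}). So there is no in-paper proof to compare against; your attempt has to be measured against the original argument in~\cite{Monom2}, which is itself a substantial induction on degrees and leading coefficients. Your setup is sound and in the same spirit: the differentiated identity for $T=\delta\circ R$, the observation that $R(1)$ cannot be a nonzero constant, the identification of the unique candidate $r=\delta(R(1))$, and the fact that $E=T-l_r$ satisfies the same functional equation with $E(1)=0$ are all correct and easy to verify.

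However, everything you label ``the core of the argument'' is asserted rather than proved, and it is exactly the part that carries the theorem. Concretely: (i) strict monotonicity of $d_n=\deg R(x^n)$ does not follow in any visible way from injectivity plus $R(f)^2=R(2fR(f))$. Writing $x^nR(x^n)=\sum_j a_jx^j$, that identity only gives $2d_n\le\max\{d_j\mid a_j\neq0\}$, with possible strict inequality caused by cancellation among the leading terms of the $R(x^j)$ --- and ruling out precisely such cancellation is what monotonicity is supposed to deliver, so the sketch is circular as stated. Injectivity only says the $R(x^n)$ are linearly independent, which does not prevent two of them from having the same degree. (ii) The concluding induction ``$E(x^n)=0$'' is not set up: specializing your equation to $R(1)E(x^n)=E\bigl(R(1)x^n\bigr)+E\bigl(R(x^n)\bigr)$, the arguments $R(1)x^n$ and $R(x^n)$ have degree $n+d_0>n$, so knowing that $E$ vanishes in degrees below $n$ gives no control over the right-hand side, and you do not say what quantity the induction actually runs on. Since you yourself flag these points as ``the main obstacle'' and ``the delicate part,'' what you have is a plausible plan consistent with the strategy of~\cite{Monom2}, but not a proof: the decisive lemmas are missing.
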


After Theorem~\ref{th:ZGR}, the following conjecture arises naturally (originally it was stated over~$\mathbb{R}$).

\begin{conjecture}[Zheng, Guo, Rosenkranz, 2015~\cite{Monom2}]\label{conj}
Every injective Rota--Baxter operator on $F[x]$
over a field $F$ of characteristic~$0$ equals $J_a \circ l_r$ for some nonzero polynomial $r\in F[x]$ and $a\in F$. 
\end{conjecture}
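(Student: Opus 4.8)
The plan is to combine Theorem~\ref{th:ZGR} with an analysis of the identity~\eqref{RB}, reducing the conjecture to a statement about algebra homomorphisms on a subalgebra of $F[x]$. By Theorem~\ref{th:ZGR} we have $\delta\circ R=l_r$ for a nonzero $r$. Since $\delta\circ(J_0\circ l_r)=l_r$ as well, the difference $\phi:=R-J_0\circ l_r$ takes values in $\ker\delta=F$, so it is a linear functional $\phi\colon F[x]\to F$ and $R=J_0\circ l_r+\phi$. Injectivity of $R$ is then automatic: if $R(f)=0$ then $I(rf)=-\phi(f)$ is a constant with zero constant term, forcing $f=0$. Writing $P:=J_0\circ l_r=I\circ l_r$, which is a Rota--Baxter operator by Lemma~\ref{l:lr}, I would substitute $R=P+\phi$ into~\eqref{RB}; the purely $P$-terms cancel by the Rota--Baxter identity for $P$, the terms linear in $\phi$ cancel as well, and one is left with the single constraint
\[
\phi\bigl(P(f)g+fP(g)\bigr)+\phi(f)\phi(g)=0\qquad(f,g\in F[x]).
\]

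First I would reinterpret this constraint geometrically. The image $V_0:=P(F[x])=\{v\in F[x]:v(0)=0,\ r\mid v'\}$ is a subalgebra, and using $P(f)g+fP(g)=\tfrac1r(P(f)P(g))'$ the constraint says exactly that $\chi\colon V_0\to F$, $\chi(P(f)):=-\phi(f)$, is multiplicative. Extending by $\tilde\chi(c+v)=c+\chi(v)$ produces a unital $F$-algebra homomorphism $\tilde\chi\colon A\to F$ on $A:=\{f\in F[x]:r\mid f'\}=F\oplus V_0$. Unwinding definitions, the conjecture becomes equivalent to the assertion that every unital $F$-algebra homomorphism $A\to F$ is an evaluation $\mathrm{ev}_a$ at a point $a\in F$: indeed $\tilde\chi=\mathrm{ev}_a$ gives $\phi(f)=-I(rf)(a)$, whence $R(f)=I(rf)-I(rf)(a)=J_a(rf)$, i.e.\ $R=J_a\circ l_r$.

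The heart of the matter is a separation lemma: for $a\ne b$ in $\overline F$ the moments $\int_b^a r x^k\,dx=I(rx^k)(a)-I(rx^k)(b)$ cannot all vanish. To prove it I would assume they all vanish, integrate by parts repeatedly against $(x-b)^k$ to deduce $r_{[j]}(a)=0$ for every iterated antiderivative $r_{[j]}$ of $r$ vanishing at $b$, and then, expanding $r$ in powers of $(x-a)$, reduce to the system $\sum_{i=0}^{d}\tfrac{i!}{(i+j)!}\,\gamma_i=0$ for all $j\ge1$, where $d=\deg r$ and $\gamma_d\ne0$. As the rational functions $j\mapsto\tfrac{i!}{(i+j)!}=\tfrac{1}{(j+1)\cdots(j+i)}$ have strictly growing pole sets, they are linearly independent, so the system forces all $\gamma_i=0$, a contradiction. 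This lemma says precisely that the normalization map $\AA^1=\Spec F[x]\to\Spec A$ is injective on $\overline F$-points.

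To finish, $A$ is a finitely generated $F$-algebra with $F[x]$ module-finite over it (since $\dim_F F[x]/A=\deg r$), so $F[x]$ is the integral closure of $A$, and $\tilde\chi\colon A\to F\hookrightarrow\overline F$ extends, by going up, to $\mathrm{ev}_a$ for some $a\in\overline F$. For every $\sigma\in\operatorname{Gal}(\overline F/F)$ the homomorphism $\mathrm{ev}_{\sigma a}$ agrees with $\tilde\chi$ on $A$ (as $\tilde\chi$ is $F$-valued), so the separation lemma forces $\sigma a=a$; hence $a\in F$ and $R=J_a\circ l_r$. The main obstacle is exactly this rationality: producing $a$ in $\overline F$ is soft, but forcing $a\in F$ fails for a general subalgebra of $F[x]$ (a conjugate pair of points may be glued together), and it is the special shape $A=\{f:r\mid f'\}$, encoded in the separation lemma, that rules this out. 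Over $\RR$ the lemma is immediate from $\int_b^a r^2\,dx\ne0$; over an arbitrary field of characteristic zero the linear-independence argument is where I expect the real work to lie.
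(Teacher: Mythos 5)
Your proposal is correct, and after the common first step it takes a genuinely different route from the paper. Both arguments begin identically: Theorem~\ref{th:ZGR} gives $\delta\circ R=l_r$, the difference $c=R-I\circ l_r$ is a linear functional, and expanding \eqref{RB} leaves exactly the quadratic constraint $c(f)c(g)+c\bigl(I(rf)g+fI(rg)\bigr)=0$ --- this is the paper's Proposition~\ref{pr:c}. From there the paper stays inside the space of functionals: it coordinatizes $c$ by $c_i=c(x^i)$, shows (Lemma~\ref{lm:pi}) that both the solution set $M_r$ and the candidate family $N_r=\{c_{r,a}\}$ project injectively onto Zariski-closed subsets of $F^{k+1}$, and proves the key Lemma~\ref{lm:P}: any polynomial vanishing on $N_r$ vanishes on $M_r$, because modulo the quadratic relations it becomes a linear relation $\sum a_ic_i=0$, which restricted to $N_r$ is a nonzero polynomial of degree $s+k+1$ in $a$; hence $M_r=N_r$ (Proposition~\ref{pr:MN}). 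You instead reinterpret the constraint as saying that $v\mapsto -c\bigl((I\circ l_r)^{-1}v\bigr)$ is a character of the finite-codimension subalgebra $A=\{f: r\mid f'\}$, so the moduli of solutions is the set of $F$-points of $\Spec A$, with $\AA^1\to\Spec A$ the normalization; lying-over produces a point $a\in\overline F$, and your separation lemma plus Galois descent forces $a\in F$. Your separation lemma is essentially the paper's Lemma~\ref{l:Ja-unique} (linear independence of the Hilbert-matrix-type vectors $(1/(j+1),\dots,1/(j+k+1))$), extended to $\overline F$, and it plays the same ``moments cannot all vanish'' role that the degree count plays in Lemma~\ref{lm:P}. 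The trade-off: the paper's argument is elementary and never leaves the ground field, while yours is more conceptual --- it explains geometrically why the answer is a one-parameter family ($\Spec A$ is $\AA^1$ with no points glued) --- at the cost of having to handle the rationality of $a$ by Galois descent, which you do correctly since $\mathrm{char}\,F=0$. Minor bookkeeping: the coefficients in your reduced system come out as $1/(i+j)$ rather than $i!/(i+j)!$ under the Cauchy formula for repeated integration, but either family of rational functions of $j$ is linearly independent, so the conclusion stands.
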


\section{Proof of Conjecture}

Below we express Conjecture~\ref{conj} in terms of linear functionals on~$F[x]$.
 
\begin{proposition}\label{pr:c}
Let $R$ be an operator on $F[x]$ such that $\delta\circ R=l_r$ for some nonzero polynomial $r(x)\in F[x]$.
Let us introduce a linear functional $c\colon F[x]\to F$ by the formula 
\[
c\colon f\mapsto R(f)(0).
\]
Then $R$ is a Rota--Baxter operator if and only if
\begin{equation}\label{eq:c}
    c(f)c(g) + c(I(rf)g+fI(rg)) = 0
\end{equation}
for all $f,g\in F[x]$.
Moreover, this is a one-to-one correspondence between Rota--Baxter operators $R$ on $F[x]$ satisfying $\delta\circ R=l_r$ and linear functionals $c$ on $F[x]$ satisfying~\eqref{eq:c}. Its inverse is defined by the formula $R=I\circ l_r+c$. 

\end{proposition}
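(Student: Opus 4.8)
The plan is to treat the statement in two stages: first establish the bijection between operators $R$ satisfying $\delta\circ R=l_r$ and linear functionals $c$, without reference to the Rota--Baxter condition, and then show that under this bijection the identity~\eqref{RB} for $R$ is equivalent to~\eqref{eq:c} for $c$.

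For the bijection, I would use that over a field of characteristic zero the kernel of $\delta$ on $F[x]$ consists exactly of the constants. Given $R$ with $\delta\circ R=l_r$, for each $f$ the polynomial $R(f)$ is an antiderivative of $rf$; since $\delta\circ I=\id$ and $I(rf)(0)=0$, the polynomial $I(rf)$ is the unique antiderivative of $rf$ vanishing at the origin, so $R(f)-I(rf)$ is the constant $R(f)(0)=c(f)$. Hence $R=I\circ l_r+c$, and $c$ is linear because $R$ and evaluation at $0$ are. Conversely, for any linear functional $c$ the operator $R:=I\circ l_r+c$ is linear and satisfies $\delta\circ R=(\delta\circ I)\circ l_r=l_r$, since constants lie in $\ker\delta$; the two assignments $R\mapsto c$ and $c\mapsto I\circ l_r+c$ are then mutually inverse. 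This already yields the claimed inverse formula $R=I\circ l_r+c$ and reduces the statement to the equivalence between the Rota--Baxter identity~\eqref{RB} for $R$ and the relation~\eqref{eq:c} for $c$.

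For the equivalence, I would substitute $R(f)=I(rf)+c(f)$ into~\eqref{RB} and expand, using linearity of $I$ and $c$ together with the fact that $c(f),c(g)$ are scalars. The left-hand side becomes $I(rf)I(rg)+c(g)I(rf)+c(f)I(rg)+c(f)c(g)$, while expanding $R\bigl(R(f)g+fR(g)\bigr)$ on the right-hand side produces
\[
I\bigl(rI(rf)g\bigr)+I\bigl(rfI(rg)\bigr)+c(f)I(rg)+c(g)I(rf)+c(I(rf)g)+c(fI(rg))+2c(f)c(g).
\]
The crucial point, which is the step I expect to carry the proof, is that $I=J_0$ is itself a Rota--Baxter operator; applying its identity~\eqref{RB} to the pair $(rf,rg)$ gives
\[
I(rf)I(rg)=I\bigl(I(rf)\,rg+rf\,I(rg)\bigr)=I\bigl(rI(rf)g\bigr)+I\bigl(rfI(rg)\bigr).
\]
Substituting this back, the two purely integral terms and all three linear terms cancel between the two sides, and~\eqref{RB} collapses to $c(f)c(g)+c(I(rf)g)+c(fI(rg))=0$, which is~\eqref{eq:c} after one more use of linearity of $c$. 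The only genuine difficulty is the bookkeeping in the expansion; the conceptual content lies entirely in recognizing that the quadratic-in-$I$ terms are absorbed by the Rota--Baxter property of the formal integration $I$.
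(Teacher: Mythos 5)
Your proof is correct and follows essentially the same route as the paper: decompose $R=I\circ l_r+c$ using that $\ker\delta$ consists of constants, then expand~\eqref{RB} and cancel the quadratic integral terms via the Rota--Baxter property of formal integration (the paper cites Lemma~\ref{l:lr} for $I\circ l_r$, you apply the identity for $I=J_0$ to the pair $(rf,rg)$ --- the same computation). The resulting reduction to $c(f)c(g)+c(I(rf)g+fI(rg))=0$ matches the paper's proof exactly.
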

\begin{proof}
Since $\delta\circ R=\delta\circ I\circ l_r$, there holds $R(f)-I(rf)\in F$ for any $f\in F[x]$, and so 
$R(f) = I(rf)+R(f)(0)$. Hence, $R = I\circ l_r + c$.

Then the condition $\eqref{RB}$ transforms into
\begin{multline*}
0 = R( R(f)g + f R(g) ) - R(f)R(g)\\
  = R\big(I(rf)g + I(rg)f + c(f)g + c(g)f\big) - (I(rf) + c(f))(I(rg) + c(g))   \\
  = I(I(rf)rg) + I(I(rg)rf) - I(rf) I(rg)
  + c(f)c(g) + c\big(I(rf)g+fI(rg)\big)\\
  = c(f)c(g) + c\big(I(rf)g+fI(rg)\big).
\end{multline*}
The last equality holds since $I\circ l_r$ is a Rota--Baxter operator by Lemma~\ref{l:lr}. 
Thus, $R$ is a~Rota--Baxter operator if and only if $c$ satisfies \eqref{eq:c}.
Since $R$ equals $I\circ l_r+c$, it is uniquely determined by $r$ and~$c$, hence the one-to-one equivalence.
\end{proof}

\begin{sit}
We denote 
\begin{enumerate}
    \item by $M_r$ the set of linear functionals satisfying \eqref{eq:c}, where $r\in F[x]$;
    \item by $c_{r,a}$ the linear functional $c_{r,a}\colon f \mapsto -I(rf)(a)$;
    \item by $N_r$ the family of linear functionals $\{c_{r,a}\mid a\in F\}$ parameterized by $a\in F$.
\end{enumerate}
\end{sit}

\begin{remark}\label{rm:Nr}
The operator $R = I\circ l_r + c_{r,a}$, corresponding to $c_{r,a}\in N_r$, equals $J_{a}\circ l_r$.
Indeed,
\[
R(f)
 = I(rf)+c_{r,a}(f)=I(rf)-I(rf)(a) 
 = J_{a}(rf).
\]
By Lemma~\ref{l:lr}, $J_{a}\circ l_r$ is a Rota--Baxter operator. Thus, $N_r\subseteq M_r$.
\end{remark}

\begin{remark}\label{rm:cn}
Let us introduce the coordinates $c_i=c(x^i)$ on the space of linear functionals and let $r=r_0+r_1x+\cdots+r_kx^k$. Then, after substituting $f=x^n,g=x^m$, the equation \eqref{eq:c} transforms into the system
\begin{equation}\label{eq:cn}
c_n c_m + \sum\limits_{i=0}^k \left(\frac{1}{i+n+1}+\frac{1}{i+m+1}\right)r_ic_{i+n+m+1} = 0, \quad n,m\in\mathbb{Z}_{\ge0}.
\end{equation}
Since $c$ is linear, it is defined by its values at 
$1,x,x^2\ldots$, so this system is equivalent to~\eqref{eq:c}.
In these coordinates $M_r\subset F^{\times\infty}=\{(c_0,c_1,\ldots)\mid c_i\in F\}$.
\end{remark}

\begin{lemma}\label{lm:P}
Let $P(c_0,\ldots,c_n)=0$ be a polynomial equation on the space of linear functionals, $P\in F[x_0,\ldots, x_n],\, n\in\NN$, such that $N_r\subset \{P(c_0,\ldots,c_n)=0\}$. Then $M_r\subset \{P(c_0,\ldots,c_n)=0\}$ as well.
\end{lemma}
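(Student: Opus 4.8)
My plan is to collapse this infinite-dimensional statement to a finite-dimensional question about Zariski closures, and then to recognise $N_r$ as a dense curve inside $M_r$.

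\textbf{Step 1 (reduction to finitely many coordinates).} Write $k=\deg r$, so that $r_k\neq0$. Taking $(n,m)=(0,N-k-1)$ in \eqref{eq:cn} for each $N>k$, the coefficient of the highest coordinate $c_N$ equals $r_k\bigl(\tfrac1{k+1}+\tfrac1N\bigr)$, a nonzero scalar in characteristic zero; hence \eqref{eq:cn} expresses $c_N$ as a polynomial in $c_0,\dots,c_{N-1}$. Iterating, for every $N>k$ there is a single polynomial $\Psi_N\in F[x_0,\dots,x_k]$ with $c_N=\Psi_N(c_0,\dots,c_k)$ on all of $M_r$, and a fortiori on $N_r$ by Remark~\ref{rm:Nr}. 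Substituting these relations into $P$ turns it into a polynomial $Q\in F[x_0,\dots,x_k]$ agreeing with $P$ both on $M_r$ and on $N_r$. Since an arbitrary $Q(c_0,\dots,c_k)$ is itself an admissible $P$, the lemma is equivalent to the finite-dimensional assertion that every $Q$ vanishing on $\pi_k(N_r)$ vanishes on $\pi_k(M_r)$, where $\pi_k$ is the projection onto the first $k+1$ coordinates; equivalently, $\overline{\pi_k(M_r)}\subseteq\overline{\pi_k(N_r)}$ in $\AA^{k+1}$.

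\textbf{Step 2 (the curve $\pi_k(N_r)$).} By the definition of $c_{r,a}$, the $j$-th coordinate of $\pi_k(N_r)$ is the polynomial $F_j(a)=-I(rx^j)(a)$ in $a$, and $F_0$ has degree $k+1\ge1$. Thus $a\mapsto(F_0(a),\dots,F_k(a))$ is a non-constant morphism $\AA^1\to\AA^{k+1}$, its image is infinite (as $F$ is infinite in characteristic zero), and $C\coloneq\overline{\pi_k(N_r)}$ is an irreducible curve. Because $C\subseteq\overline{\pi_k(M_r)}$, it now suffices to prove that $\overline{\pi_k(M_r)}$ is itself irreducible of dimension one, for then it coincides with $C$ and we are done. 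For $k=0$ this is automatic, since $F_0$ is then linear and $C=\AA^1$.

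\textbf{Step 3 (the crux).} The main obstacle is to show that $\overline{\pi_k(M_r)}$ is one-dimensional and irreducible, with no component beyond $C$ in which a stray point of $M_r$ could hide. The source of the needed equations is the heavy over-determination of \eqref{eq:cn}: for a fixed $s=n+m$, all pairs with $n+m=s$ constrain the same block of coordinates $c_{s+1},\dots,c_{s+k+1}$, so once $s$ is large these $k+1$ unknowns are subject to more than $k+1$ linear equations whose inhomogeneities are the products $c_nc_m$. Each resulting solvability condition, rewritten through the $\Psi_N$, becomes a polynomial relation purely among $c_0,\dots,c_k$; the task is to show that these relations cut $\pi_k(M_r)$ down to dimension one and keep it irreducible. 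A useful structural tool is that the parametrising curve has velocity $\tfrac{d}{da}c_{r,a}\colon f\mapsto -r(a)f(a)$, so its tangent directions are the moment vectors $-r(a)(1,a,a^2,\dots)$; this non-degeneracy of $C$ should match the linearisation of the compatibility relations and thereby forbid any extra component of $\pi_k(M_r)$ from escaping $C$. Carrying out this dimension-and-irreducibility analysis is where I expect the real work to lie.
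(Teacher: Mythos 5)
Your Steps 1 and 2 are a correct reduction: solving \eqref{eq:cn} with $m=0$ for the top coordinate $c_N$, $N>k$, and substituting (this is exactly what the paper does, but in the proof of Lemma~\ref{lm:pi}, not here) does turn the lemma into the finite-dimensional statement $\pi_k(M_r)\subseteq\overline{\pi_k(N_r)}=C$ with $C$ an irreducible curve. But Step 3, which you yourself flag as ``where the real work lies,'' is the entire content of the lemma, and you have not proved it. Nothing you have written excludes extra components of $\overline{\pi_k(M_r)}$ --- for instance isolated points of $\pi_k(M_r)$ lying off $C$ --- and the heuristics about the over-determination of \eqref{eq:cn} and the tangent directions of $C$ do not constitute an argument. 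As it stands the proof is incomplete at its crucial step.

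The paper closes this gap by a short mechanism that never mentions dimension or irreducibility. Each relation in \eqref{eq:cn} expresses a product $c_nc_m$ as a linear combination of the coordinates $c_{n+m+1},\dots,c_{n+m+k+1}$; substituting repeatedly, any polynomial $P$ becomes, \emph{on $M_r$}, an affine-linear expression $L=\sum_{i=0}^{s}a_ic_i$ (possibly plus a constant). Since $N_r\subset M_r$ and $P$ vanishes on $N_r$, so does $L$. But on $N_r$ the coordinate $c_i=c_{r,a}(x^i)=-I(rx^i)(a)$ is a polynomial in $a$ of degree exactly $i+k+1$, so the summands of $L|_{N_r}$ have pairwise distinct degrees and $L|_{N_r}$ has degree $s+k+1$ with nonzero leading coefficient unless $L$ is trivial. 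As $F$ is infinite, $L$ must be identically zero, hence $P$ vanishes on all of $M_r$. If you want to complete your own route, you would have to show that the compatibility relations $g_{m,n}(c_0,\dots,c_k)=0$ obtained after your substitutions cut $\AA^{k+1}$ down to exactly the curve $C$; the linearization trick above is precisely the efficient way to achieve that, so I recommend adopting it in place of your Step 3.
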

\begin{proof}
The equation $P(c_0,\ldots,c_n)=0$ is equivalent on $M_r$ to some linear one $\sum\limits_{i=0}^s a_ic_i=0$ for some $s\in \ZZ_{\ge0}$, which we denote by $L$. 
In order to see this equivalence, it is enough to substitute products $c_nc_m$ repeatedly by linear combinations from corresponding equations in~\eqref{eq:cn}.

Assume that $L$ is non-trivial and the highest coefficient $a_s$ is nonzero. 
Note that $c_{r,a}(x^n)=-I(rx^n)(a)$ is a~polynomial of degree $n+k+1$ in~$a$. Thus, substituting $c_i=c_{r,a}(x^i)$, $i=0,\ldots,s$, into~$L$, we obtain a~nonzero polynomial $\sum\limits_{i=0}^s a_i c_{r,a}(x^i)$ on~$a$ of degree~$s+k+1$. 
Since $F$ is an infinite field, we get a~contradiction.
\end{proof}

Define projections 
$\pi_l\colon F^{\times\infty}\to F^{l+1}$ as follows, \[\pi_l((c_0,c_1,\ldots)) = (c_0,c_1,\ldots,c_l).\]
\begin{lemma}\label{lm:pi}
Let $r\in F[x]$ be a nonzero polynomial of degree $k=\deg(r)$. Then
\begin{enumerate}
    \item $\pi_k$ is injective on $M_r$ and $N_r$,
    \item $\pi_k(M_r)$ and $\pi_k(N_r)$ are Zariski closed subsets in $F^{k+1}$.
\end{enumerate}
\end{lemma}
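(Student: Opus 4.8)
The plan is to extract from the system \eqref{eq:cn} an explicit recursion and then read off both assertions from it. For part~(1), fix $N\ge k+1$ and write $N=k+n+m+1$ with $n+m=N-k-1\ge 0$. In the instance of \eqref{eq:cn} attached to this pair $(n,m)$ the unknown of largest index is $c_{k+n+m+1}=c_N$, and its coefficient equals $\left(\frac{1}{k+n+1}+\frac{1}{k+m+1}\right)r_k$, which is nonzero because $r_k\ne 0$ and $\operatorname{char}F=0$. Solving for $c_N$ expresses it through the product $c_nc_m$ and through $c_{n+m+1},\dots,c_{k+n+m}$, all of index strictly less than $N$. Hence, by induction on $N$, there are polynomials $Q_N\in F[x_0,\dots,x_k]$ (with $Q_N=x_N$ for $N\le k$) such that $c_N=Q_N(c_0,\dots,c_k)$ holds on $M_r$. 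In particular a functional in $M_r$ is recovered from its image under $\pi_k$, which proves that $\pi_k$ is injective on $M_r$; since $N_r\subseteq M_r$ by Remark~\ref{rm:Nr}, it is injective on $N_r$ as well.

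For the closedness of $\pi_k(M_r)$, observe that a tuple $(c_0,\dots,c_k)$ lies in $\pi_k(M_r)$ if and only if its forced extension $(Q_N(c_0,\dots,c_k))_{N\ge 0}$ satisfies every equation of \eqref{eq:cn}. The equations used to build the recursion hold by construction, and substituting the $Q_N$ into each remaining equation turns it into a polynomial condition $P_\alpha(c_0,\dots,c_k)=0$. Thus $\pi_k(M_r)=\bigcap_\alpha\{P_\alpha=0\}$ is Zariski closed, being an intersection of zero sets of polynomials.

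The closedness of $\pi_k(N_r)$ is the delicate point. By Remark~\ref{rm:Nr}, $\pi_k(N_r)$ is the image of the polynomial map $\phi\colon\AA^1\to\AA^{k+1}$, $a\mapsto(p_0(a),\dots,p_k(a))$ with $p_i(a)=-\sum_{j=0}^k\frac{r_j}{i+j+1}a^{i+j+1}$ and $\deg p_i=i+k+1$. The obstacle is that over a field that is not algebraically closed the image of $\AA^1$ under a polynomial map need not be closed (already $a\mapsto a^2$ over $\RR$ fails). To circumvent this I set $B=F[p_0(a),\dots,p_k(a)]\subseteq F[a]$ and let $Y=\Spec B\hookrightarrow\AA^{k+1}$, a closed subvariety with $\pi_k(N_r)\subseteq Y(F)$. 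Two facts then finish the argument: (i) $F[a]$ is integral over $B$ because $p_0$ is nonconstant, so after base change to $\bar F$ the induced map $\AA^1\to Y$ is surjective on points and $\phi$ hits every point of $Y(\bar F)$; and (ii) $\phi$ is injective on $\bar F$-points. Granting (i)--(ii), each $(c_0,\dots,c_k)\in Y(F)$ has a unique preimage $a\in\bar F$ under $\phi$; since $(c_i)$ is $\mathrm{Gal}(\bar F/F)$-invariant, so is $a$, whence $a\in F$. Therefore $\pi_k(N_r)=Y(F)$ is Zariski closed.

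It remains to establish the injectivity in (ii), and this is the step I expect to be the main obstacle, since it is exactly what rescues closedness over a non-algebraically closed ground field. Using part~(1) over $\bar F$, the equality $\phi(a)=\phi(b)$ forces all coordinates of $c_{r,a}$ and $c_{r,b}$ to agree, that is $\int_b^a r(t)t^n\,dt=0$ for every $n\ge 0$. I would deduce $a=b$ by a formal generating-function computation: the sequence $\mu_n=\int_b^a t^n\,dt=\frac{a^{n+1}-b^{n+1}}{n+1}$ satisfies the constant-coefficient recurrence $\sum_{j=0}^k r_j\mu_{n+j}=0$, so its generating series would have to be a rational function; but $\sum_{n\ge 0}\mu_nz^n=\frac1z\log\frac{1-bz}{1-az}$ as a formal power series, which is not rational unless $a=b$ (its formal derivative has a nonzero residue at $z=1/a$, whereas a derivative has vanishing residues). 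This yields the required injectivity and, with it, the closedness of $\pi_k(N_r)$.
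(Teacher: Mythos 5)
Your proof is correct. For assertion (1) and for the closedness of $\pi_k(M_r)$ it is the same argument as the paper's: the $(n,0)$-instances of \eqref{eq:cn} solve for $c_N$ (the coefficient $\bigl(\frac{1}{k+n+1}+\frac{1}{k+m+1}\bigr)r_k$ being nonzero in characteristic zero) and yield the recursion $c_N=Q_N(c_0,\dots,c_k)$, after which the remaining equations become polynomial conditions on $(c_0,\dots,c_k)$. Where you genuinely diverge is the closedness of $\pi_k(N_r)$. The paper extends $\phi_k$ to $\bar\phi_k\colon\PP^1\to\PP^{k+1}$ and uses completeness of $\PP^1$ plus the absence of nonconstant regular functions to conclude that $\bar\phi_k(\PP^1)\cap\AA^{k+1}=\phi_k(\AA^1)$ is closed; that argument directly controls only the image of $\bar F$-points, and over a non-algebraically-closed $F$ the image of $\AA^1(F)$ under a morphism need not be closed (your $a\mapsto a^2$ over $\RR$ example is exactly the issue). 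Your route --- $Y=\Spec F[p_0,\dots,p_k]$ is closed, the map $\AA^1\to Y$ is finite (hence surjective on $\bar F$-points) and injective on $\bar F$-points, so Galois descent identifies $\pi_k(N_r)$ with $Y(F)$ --- supplies precisely the missing ingredient, namely injectivity over $\bar F$, and is the more complete argument for a general field of characteristic zero. Two small remarks: the injectivity step can be obtained more cheaply from the paper's own Lemma~\ref{l:Ja-unique} applied over $\bar F$ (the Cauchy-matrix argument shows that $I(rx^j)(a)=I(rx^j)(b)$ for $j=0,\dots,k$ forces $a=b$), which avoids the generating-function/residue computation; and in that computation the residue claim should be applied to the derivative of $zM(z)=\log\frac{1-bz}{1-az}$, with the degenerate case $a=0$ or $b=0$ noted separately --- both are cosmetic, not gaps.
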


\begin{proof}
We will prove both assertions at first for $M_r$ and then do the same for $N_r$.
Let us transform the system \eqref{eq:cn} as follows. For any $c_t$, where $t>k$, we may transform the equation with $m=0$ and $n=t-1-k$ to the form $c_t=P_t(c_0,\ldots,c_{t-1})$. So, $c_t$ can be expressed as a~polynomial in coordinates with lower indices. 

Any other equation corresponding to some $m,n\in\mathbb{Z}_{\ge0}$ can be reduced to the form $g_{m,n}(c_0,\ldots,c_k)$ by consecutive substitutions $c_t=P_t(c_0,\ldots,c_{t-1})$ for all $t>k$. 
Thus, the system \eqref{eq:cn} is equivalent to a system of polynomial equations
\[\{g_{m,n}(c_0,\ldots,c_k)=0\mid m>0, n\ge0\}\cup\{c_t=P_t(c_0,\ldots,c_{t-1})\mid t>k\}.\]

So, the subset $\pi_k(M_r)\subset F^{k+1}$ is defined by the polynomial system $\{g_{m,n}(c_0,\ldots,c_k)\}$, hence is Zariski closed. The map $\pi_k\colon M_r\to \pi_k(M_r)$ is a one-to-one correspondence with the inverse map $(c_0,\ldots$, $c_k)\mapsto (c_0,\ldots,c_k, P_{k+1},P_{k+2},\ldots)$. 

Since $N_r\subset M_r$, $\pi_k$ is injective on $N_r$ as well.
Consider the map $\phi\colon F\to N_r$ acting as $\phi(a)=c_{r,a}$ and  $\phi_k=\pi_k\circ\phi\colon a\mapsto (I(r)(a),I(xr)(a),\ldots,I(x^kr)(a))$. 
Then $\phi_k$ is a~morphism that maps an affine line into an affine space, hence its image $\phi_k(\mathbb{A}^1)=\pi_k(N_r)$ is Zariski closed. 

Indeed, we may extend $\phi_k$ to the morphism $\bar{\phi}_k\colon\mathbb{P}^1\to \mathbb{P}^{k+1}$ 
of a~projective line into a~projective space. Since $\mathbb{P}^1$ is a~complete algebraic variety, its image is closed. On the other hand, $\mathbb{P}^1$ does not have non-constant regular functions and thus its image cannot lie in the affine space $\mathbb{A}^{k+1}$. So, $\bar{\phi}_k(\mathbb{P}^1\setminus \mathbb{A}^1)\in \mathbb{P}^{k+1}\setminus\mathbb{A}^{k+1}$, and the closure $\phi_k(\mathbb{A}^1)$ lies in $\bar{\phi}_k(\mathbb{P}^1)\cap \mathbb{A}^{k+1}=\phi_k(\mathbb{A}^1)$.
\end{proof}

\begin{proposition}\label{pr:MN}
For any nonzero $r\in F[x]$ there holds $M_r=N_r.$
\end{proposition}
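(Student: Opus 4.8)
The plan is to transport the problem into the finite-dimensional affine space $F^{k+1}$, where $k=\deg(r)$, by means of the projection $\pi_k$, and then to combine the two preceding lemmas. By Lemma~\ref{lm:pi}, the map $\pi_k$ is injective on both $M_r$ and $N_r$, and both images $\pi_k(M_r)$ and $\pi_k(N_r)$ are Zariski closed in $F^{k+1}$. Since $N_r\subseteq M_r$ by Remark~\ref{rm:Nr}, we already have $\pi_k(N_r)\subseteq\pi_k(M_r)$, so it only remains to establish the reverse inclusion of images; the equality $M_r=N_r$ will then follow from injectivity of $\pi_k$ on $M_r$.

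To obtain $\pi_k(M_r)\subseteq\pi_k(N_r)$, I would argue through vanishing ideals. Take any polynomial $P\in F[x_0,\ldots,x_k]$ vanishing on $\pi_k(N_r)$; equivalently, $P(c_0,\ldots,c_k)=0$ is a polynomial equation satisfied by every functional in $N_r$. By Lemma~\ref{lm:P}, this equation is then satisfied by every functional in $M_r$, that is, $P$ vanishes on $\pi_k(M_r)$. Hence the vanishing ideal of $\pi_k(N_r)$ is contained in that of $\pi_k(M_r)$, and passing to the common zero locus $V(\cdot)$ reverses the inclusion, giving $\overline{\pi_k(M_r)}\subseteq\overline{\pi_k(N_r)}$. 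As both sets are already closed by Lemma~\ref{lm:pi}, we conclude $\pi_k(M_r)\subseteq\pi_k(N_r)$, and therefore $\pi_k(M_r)=\pi_k(N_r)$.

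Finally, the equality of the projected sets upgrades to $M_r=N_r$ by injectivity: given $c\in M_r$, its image $\pi_k(c)$ lies in $\pi_k(N_r)$, so $\pi_k(c)=\pi_k(c')$ for some $c'\in N_r\subseteq M_r$; since $\pi_k$ is injective on $M_r$, we get $c=c'\in N_r$, whence $M_r\subseteq N_r$.

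The substantive content is entirely front-loaded into Lemmas~\ref{lm:P} and~\ref{lm:pi}, so the main obstacle is not this final assembly but the bookkeeping of the closure direction: one must correctly pass from the functional-level statement of Lemma~\ref{lm:P} to the containment of vanishing ideals of the \emph{projected} sets, and remember that applying $V(\cdot)$ reverses inclusions. Once that is in place, the Zariski density of $N_r$ in $M_r$ on $F^{k+1}$, together with the closedness of both images, forces the two sets to coincide.
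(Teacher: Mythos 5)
Your proof is correct and follows essentially the same route as the paper: both arguments reduce to $F^{k+1}$ via $\pi_k$, use the Zariski-closedness of $\pi_k(N_r)$ and the injectivity of $\pi_k$ on $M_r$ from Lemma~\ref{lm:pi}, and invoke Lemma~\ref{lm:P} to transfer polynomial vanishing from $N_r$ to $M_r$. The only difference is presentational: the paper argues by contradiction, separating a hypothetical point of $\pi_k(M_r)\setminus\pi_k(N_r)$ from the closed set $\pi_k(N_r)$ by a polynomial, whereas you phrase the same content directly via containment of vanishing ideals.
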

\begin{proof}
By Remark~\ref{rm:Nr}, we have $N_r\subset M_r$.
Assume that $c\in M_r\setminus N_r$. 
Then $\pi_k(c)\in \pi_k(M_r)\setminus\pi_k(N_r)$, where $k=\deg(r)$.

By Lemma~\ref{lm:pi}, $\pi_k(N_r)$ is Zariski closed, hence there exists a~polynomial $P\in F[c_0,\ldots, c_k]$ that vanishes on $\pi_k(N_r)$ but does not equal 0 at $\pi_k(c)$. 
Note that $P(b)=P(\pi_k(b))$ for any $b\in M_r$. 
Thus, $P$ vanishes on $N_r$ but not on $M_r$. 
This contradicts Lemma~\ref{lm:P}.
\end{proof}

In the following theorem we confirm Conjecture~\ref{conj}.
\begin{theorem}\label{th:conj}
Every injective Rota--Baxter operator of weight zero on $F[x]$ over a~field~$F$ of characteristic~$0$ equals $J_a \circ l_r$ for some nonzero polynomial $r\in F[x]$ and $a\in F$. 
\end{theorem}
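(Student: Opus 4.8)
The plan is to assemble the results already established in this section into a single chain, since the conceptual content has been isolated in Theorem~\ref{th:ZGR} together with Propositions~\ref{pr:c} and~\ref{pr:MN}. Starting from an arbitrary injective Rota--Baxter operator $R$ of weight zero on $F[x]$, the first step is to apply Theorem~\ref{th:ZGR}, which produces a nonzero polynomial $r\in F[x]$ satisfying $\delta\circ R=l_r$. This places $R$ exactly in the situation governed by Proposition~\ref{pr:c}.

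Next I would invoke Proposition~\ref{pr:c} to write $R=I\circ l_r+c$, where $c\colon f\mapsto R(f)(0)$ is the associated linear functional. Because $R$ is a Rota--Baxter operator by hypothesis, the same proposition guarantees that $c$ satisfies~\eqref{eq:c}, that is, $c\in M_r$. The decisive step is then to use Proposition~\ref{pr:MN}, which asserts $M_r=N_r$; consequently $c\in N_r$, so $c=c_{r,a}$ for some $a\in F$. Finally, Remark~\ref{rm:Nr} identifies $R=I\circ l_r+c_{r,a}$ with $J_a\circ l_r$, which is precisely the asserted analytic model.

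I expect the main obstacle to lie not in this concluding assembly but in the equality $M_r=N_r$ supplied by Proposition~\ref{pr:MN}. That equality in turn depends on two nontrivial ingredients: the reduction of the quadratic system~\eqref{eq:cn} to a single linear relation via repeated substitution of the products $c_nc_m$ (Lemma~\ref{lm:P}), and the Zariski-closedness of the parametrized family $\pi_k(N_r)$ obtained through the projective completion $\bar{\phi}_k\colon\mathbb{P}^1\to\mathbb{P}^{k+1}$ in Lemma~\ref{lm:pi}. The delicate point in Lemma~\ref{lm:P} is the degree bookkeeping: one must verify that the linearized relation $\sum_{i=0}^s a_ic_i=0$, if nontrivial, yields upon the substitution $c_i=c_{r,a}(x^i)$ a genuinely nonzero polynomial in $a$ of degree $s+k+1$, whose existence contradicts the infinitude of $F$. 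Once these technical facts are granted, the theorem follows without further computation.
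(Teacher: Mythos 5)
Your proposal is correct and follows exactly the same route as the paper's own proof: apply Theorem~\ref{th:ZGR} to obtain $r$, pass to the functional $c\in M_r$ via Proposition~\ref{pr:c}, conclude $c\in N_r$ from Proposition~\ref{pr:MN}, and identify $R=J_a\circ l_r$ by Remark~\ref{rm:Nr}. Your remarks on where the real difficulty lies (Lemmas~\ref{lm:P} and~\ref{lm:pi}) accurately reflect the structure of the argument.
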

\begin{proof}
Let $R$ be an injective Rota--Baxter operator on $F[x]$. By Theorem~\ref{th:ZGR}, there exists a~nonzero $r\in F[x]$ such that $\delta\circ R=l_r$. 

Let $c\colon f\mapsto R(f)(0)$. 
Proposition~\ref{pr:c} implies that $c\in M_r$, hence $c\in N_r$ by Proposition~\ref{pr:MN}. 
Due to Remark~\ref{rm:Nr}, the corresponding operator $R=I\circ l_r+c$ equals $J_{a}\circ l_r$ for some $a\in F$.
\end{proof}

\section{The moduli space}
Given an ascending sequence of closed embeddings of algebraic varieties \[X_1\hookrightarrow X_2\hookrightarrow \ldots,\] their inductive limit $X=\varinjlim X_n$ is called an \emph{ind-variety}.  
An \emph{morphism} $\phi\colon X\to Y$ of ind-varieties $X=\varinjlim X_n$ and $Y=\varinjlim Y_n$ is a~collection of morphisms $\phi_n\colon X_n\to Y_{n'}$ for each $n\in\NN$, where $n'\in\NN$ depends on $n$, such that $\phi_{n+1}|_{X_n}=\phi_n$. 
Two sequences $\varinjlim X_n$ and $\varinjlim X_n^\prime$ on $X$ are called \emph{equivalent}, if the identity map $\varinjlim X_n\cong\varinjlim X_n^\prime$ is an isomorphism of ind-varieties.
If all the $X_i$, $i\in\NN$, are affine (resp. quasi-affine, projective, irreducible) up to an equivalence, then $X$ is called \emph{affine} (resp. \emph{quasi-affine}, \emph{projective}, \emph{irreducible}). 

Let $X=\varinjlim X_n$ and $U=\varinjlim U_n$ be ind-varieties such that $U_n\subset X_n$ is an open (resp. closed) subset for each $n\in\NN$.
Then $U$ is called an \emph{open} (resp. \emph{closed}) ind-subvariety of $X$.
The direct product of ind-varieties $X,Y$ is defined by $X\times Y=\varinjlim X_n\times Y_n$. 

An ind-variety $G=\varinjlim X_n$ is called an \emph{ind-group} if it is endowed with a~group structure such that multiplication and inverse maps are ind-morphisms.
An action of an ind-group $G$ on an ind-variety $X$ is called \emph{regular}, if the action map $G\times X\to X$ is an ind-morphism.

Ind-varieties and ind-groups were introduced by I.~Shafarevich in 1965 \cite{Sh}, see e.g. \cite[Sections 4.1--4.3]{Kum} for precise definitions and properties and \cite{FK} for further details.

By $\G_a=\G_a(\K)$ we denote the additive group of the field~$F$. 
In this section we describe the moduli space of injective Rota--Baxter operators of weight zero on $\K[x]$ in terms of ind-varieties and additive actions on them.

\subsection*{Structure of an ind-variety}
\begin{sit}
For each $a\in\K$ and $n\in\ZZ_{\ge0}$ we denote
\begin{gather*}
\RB^a_n =\{J_a\circ l_r\mid r\in\K[x]\setminus\{0\}, \deg(r)\le n\},\\
\RB_n=\bigcup_{a\in\K}\RB^a_n,\quad
\RB^a =\bigcup_{n\in\NN}\RB^a_n,\quad
\RB=\bigcup_{a\in\K}\RB^a.
\end{gather*}
By Theorem~\ref{th:conj}, $\RB$ is the set of injective Rota--Baxter operators of weight zero on $\K[x]$.
\end{sit}

\begin{remark}
The set $\RB^a$, where $a\in\K$, together with the zero operator forms a~subspace in the space of operators.
\end{remark}

\begin{lemma}\label{l:Ja-unique}
Let $R=J_a\circ l_r\in\RB$, and let $b\in\K$ be such that $R(f)(b)=0$ for all $f\in\K[x]$.
Then $b=a$.
\end{lemma}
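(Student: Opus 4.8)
The plan is to reduce the statement to the non-vanishing of a definite-integral functional and then to a linear-algebra fact valid over any field of characteristic zero. First I would record the two facts that drive everything: since $R=J_a\circ l_r$ we have $R(f)=J_a(rf)$, and for every $g\in\K[x]$ the antiderivative $J_a(g)$ vanishes at $a$, because $J_a(x^n)(a)=(a^{n+1}-a^{n+1})/(n+1)=0$. In particular $R(f)(a)=0$ for all $f$, so $a$ is indeed a common zero (the direction we already know); the content is uniqueness. Evaluating the same antiderivative at $b$ gives $R(f)(b)=J_a(rf)(b)=\int_a^b r(t)f(t)\,dt$, where the integral is understood formally as $J_0(rf)(b)-J_0(rf)(a)$. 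Thus the hypothesis says that the linear functional $f\mapsto\int_a^b rf$ vanishes identically on $\K[x]$, and I must show this forces $b=a$.

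Second, I would argue by contradiction: assume $\beta\coloneq b-a\neq 0$. Specializing $f=(x-a)^j$ for $j\ge0$ and substituting $s=x-a$ turns the vanishing into the moment conditions $\int_0^\beta \tilde r(s)\,s^j\,ds=0$ for all $j\ge0$, where $\tilde r(s)=r(s+a)=\sum_{i=0}^k \tilde r_i s^i$ is again nonzero of degree $k=\deg r$. Carrying out the formal integration yields $\sum_{i=0}^k \tilde r_i\,\beta^{\,i+j+1}/(i+j+1)=0$; dividing by $\beta^{\,j+1}\neq0$ and writing $d_i=\tilde r_i\beta^i$ gives the clean system $\sum_{i=0}^k d_i/(i+j+1)=0$ for every $j\ge0$.

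Finally I would extract $d_0=\dots=d_k=0$ from this overdetermined system. One clean way: the rational function $\Phi(z)=\sum_{i=0}^k d_i/(z+i)$ has numerator of degree at most $k$, yet $\Phi(j+1)=0$ for all $j\ge0$, so the numerator has infinitely many roots and is the zero polynomial; evaluating it near $z=-i$ isolates $d_i\cdot\prod_{i'\neq i}(i'-i)=0$, and the integer product is nonzero, so $d_i=0$. Equivalently, the first $k+1$ equations have the Cauchy/Hilbert coefficient matrix $(1/(i+j+1))_{0\le i,j\le k}$, whose determinant is a nonzero rational number and hence invertible over $\K$. Since $\beta\neq0$ this forces $\tilde r_i=0$ for all $i$, contradicting $r\neq0$. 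Hence $\beta=0$, i.e.\ $b=a$.

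The main obstacle is conceptual rather than computational: because $\K$ is an arbitrary field of characteristic zero, I cannot invoke positivity of the integral to produce a single test function $f$ with $\int_a^b rf\neq0$. The whole point is to replace that analytic input by the purely algebraic statement that the Hilbert/Cauchy matrix is nonsingular over $\QQ$, which then holds over any characteristic-zero field; everything else is bookkeeping with the moments of $r$.
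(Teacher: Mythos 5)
Your proposal is correct and follows essentially the same route as the paper: shift coordinates so that $a=0$, test against the monomials $f=x^j$ (equivalently $(x-a)^j$), factor out $b^{j+1}$, and conclude from the nonsingularity of the Cauchy/Hilbert matrix $\bigl(1/(i+j+1)\bigr)_{0\le i,j\le k}$ that $r=0$, a contradiction. The only difference is that you also supply a proof of that nonsingularity (via the rational function $\Phi$), which the paper simply asserts.
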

\begin{proof}
Up to the change of coordinates $x\mapsto x-a$ we may assume that $a=0$.
Let $r=r_0+\ldots+r_kx^k$ for some $k\in\ZZ_{\ge0}$ and substitute $f=x^j$, where $j\in\ZZ_{\ge0}$:
\[
0=R(f)(b)=I(rx^j)(b)=\sum_{i=0}^k \frac{r_ib^{i+j+1}}{i+j+1}=b^{j+1}\sum_{i=0}^k \frac{r_ib^{i}}{i+j+1}.
\]
Assume that $b\neq0.$
Then the vector $v=(r_0,r_1b,\ldots,r_kb^k)\in\K^{k+1}$ is orthogonal to each of vectors $u_j=\left(\frac{1}{j+1},\frac{1}{j+2},\ldots,\frac{1}{j+k+1}\right)\in\K^{k+1}$, where $j\in \ZZ_{\ge0}$. 
Since the vectors $u_0,\ldots,u_{k}$ form a basis of $\K^{k+1}$, we have $v=0$ and $r=0$, a~contradiction.
\end{proof}

\begin{example}
%The set $\K[x]\setminus\{0\}$ admits a natural structure of a quasi-affine ind-variety.
For any $d\in\NN$ the set $\K[x]_{\le d}\setminus\{0\}$ of nonzero polynomials of degree at most $d$ is an irreducible quasi-affine variety isomorphic to $\AA^{d+1}\setminus\{0\}$.
Thus, 
\[
\K[x]\setminus\{0\}=\varinjlim \K[x]_{\le d}\setminus\{0\}
\] 
is a quasi-affine ind-variety.
\end{example}

\begin{proposition}\label{pr:moduli}
 \begin{enumerate}
    \item The correspondence $\K\times(\K[x]\setminus\{0\})\to\RB$, $(a,r)\mapsto J_a\circ l_r$, is bijective.
In particular, $\RB$ admits a natural structure of a quasi-affine ind-variety.
     \item  The operator action map $\RB\times\K[x]\to\K[x]$ is an ind-morphism.
 \end{enumerate}
\end{proposition}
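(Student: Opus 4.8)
The plan is to handle the two parts separately, grounding part~(1) in the results already established and reducing part~(2) to a direct coordinate computation.

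For part~(1), surjectivity of the correspondence $(a,r)\mapsto J_a\circ l_r$ is exactly the content of Theorem~\ref{th:conj}: every injective Rota--Baxter operator of weight zero on $\K[x]$ has this form. For injectivity, suppose $J_a\circ l_r=J_b\circ l_s=R$. Since $J_a(g)(a)=0$ for every $g\in\K[x]$, we have $R(f)(a)=J_a(rf)(a)=0$ for all $f$, and likewise $R(f)(b)=0$ for all $f$; Lemma~\ref{l:Ja-unique} then forces $a=b$. Applying $\delta$ and using $\delta\circ J_a=\id$ gives $l_r=\delta\circ R=l_s$, hence $r=s$. Once bijectivity is known, the ind-variety structure is transported along it: using the filtration $\K[x]\setminus\{0\}=\varinjlim\K[x]_{\le d}\setminus\{0\}$ from the Example, the product $\K\times(\K[x]\setminus\{0\})=\varinjlim\K\times(\K[x]_{\le d}\setminus\{0\})$ is a quasi-affine ind-variety, each term being isomorphic to $\AA^1\times(\AA^{d+1}\setminus\{0\})$, and the closed embeddings in degree induce the ascending chain. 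Transporting through the bijection equips $\RB=\varinjlim\RB_d$ with a natural quasi-affine ind-variety structure, where $\RB_d$ corresponds to $\K\times(\K[x]_{\le d}\setminus\{0\})$.

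For part~(2), I would fix the diagonal filtration $\RB\times\K[x]=\varinjlim(\RB_n\times\K[x]_{\le n})$ and show that the action map $\alpha\colon(R,f)\mapsto R(f)$ restricts to a morphism of ordinary varieties on each term. If $R=J_a\circ l_r$ with $\deg(r)\le n$ and $\deg(f)\le n$, then $R(f)=J_a(rf)$ has degree at most $2n+1$, so $\alpha$ restricts to $\alpha_n\colon\RB_n\times\K[x]_{\le n}\to\K[x]_{\le 2n+1}$. Writing $r=\sum_{i=0}^n r_ix^i$ and $f=\sum_{j=0}^n f_jx^j$ in coordinates, and setting $p_k=\sum_{i+j=k}r_if_j$, we have $R(f)=\sum_k p_k\,(x^{k+1}-a^{k+1})/(k+1)$. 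The coefficient of $x^{k+1}$ in the output is $p_k/(k+1)$ and the constant term is $-\sum_k p_k a^{k+1}/(k+1)$; each is a polynomial in the coordinates $(a,r_0,\ldots,r_n,f_0,\ldots,f_n)$, so $\alpha_n$ is a morphism. Since $\alpha_{n+1}$ restricts to $\alpha_n$ on $\RB_n\times\K[x]_{\le n}$ by construction, the collection $\{\alpha_n\}$ defines an ind-morphism.

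I do not expect a genuinely hard step here: part~(1) is a clean assembly of Theorem~\ref{th:conj} and Lemma~\ref{l:Ja-unique} together with the transport of the ind-structure, and part~(2) is the verification that the output coefficients depend polynomially on the input coordinates. The only point requiring care is bookkeeping the filtration so that the target degree bound $2n+1$ and the compatibility $\alpha_{n+1}|_{\RB_n\times\K[x]_{\le n}}=\alpha_n$ are stated correctly; with the diagonal filtration this is automatic.
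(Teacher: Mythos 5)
Your proposal is correct and follows essentially the same route as the paper: bijectivity via Lemma~\ref{l:Ja-unique} for the point $a$ plus recovery of $r$ (the paper applies the operator to $1$ and discards the constant term, you apply $\delta$ and use $\delta\circ J_a=\id$ — an equivalent one-line variation), then transport of the ind-structure from $\K\times(\K[x]_{\le d}\setminus\{0\})\cong\AA^1\times(\AA^{d+1}\setminus\{0\})$, and the same explicit coordinate formula for the action map $\RB_n\times\K[x]_{\le n}\to\K[x]_{\le 2n+1}$. No gaps.
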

\begin{proof}
Let $(a,r),(a',r')\in \mathcal{R}$ be such that $J_a\circ l_r=J_{a'}\circ l_{r'}$.
Applying this operator to~1, we obtain $J_a(r)=J_{a'}(r')$. 
Discarding the constant term, we have $I(r)=I(r')$, hence $r=r'$. 
By Lemma~\ref{l:Ja-unique}, $a=a'$.

Therefore, we may identify $\RB$ with $\K\times(\K[x]\setminus\{0\})$.
For any $d\in\NN$ 
we have an irreducible quasi-affine variety
$\K\times(\K[x]_{\le d}\setminus\{0\})$ isomorphic to $\AA^1\times(\AA^{d+1}\setminus\{0\})$.
Thus, 
\[
\RB=\bigcup_{d\in\NN}\K\times(\K[x]_{\le d}\setminus\{0\})
\]
is an ind-variety. The first assertion follows.

The operator action map $\RB_n\times \K[x]_{\le n}\to \K[x]_{\le 2n+1}$ is given by the morphism
\[
((a,r_0+\ldots+r_{n}x^n), f_0+\ldots+f_{n}x^n)\mapsto \sum_{i=0}^n\sum_{j=0}^n \frac{r_if_j(x^{i+j+1}-a^{i+j+1})}{i+j+1}.
\]
The second assertion follows.
\end{proof}

\begin{definition}
Let $X$ be an ind-variety.
A subset $Z\subset \Aut(X)$ of automorphisms of $X$ together with a structure of an algebraic variety on $Z$
is called an \emph{algebraic family} of automorphisms of $X$ if 
the action map $Z\times X\to X$ is a morphism.

We say that the action of an ind-group $G$ on $X$ is \emph{universal} if for any algebraic family $Z$ of automorphisms of $X$, which is a subset of $G$, the trivial embedding $Z\to G$ is a morphism. 
\end{definition}

\begin{remark}
A universal structure of an ind-variety on $X$ is unique up to equivalence, if it exists.
In particular, for any affine algebraic variety $X$ there exists a structure of an ind-group on $\Aut(X)$ such that its action on $X$ is universal, see \cite[Theorem 5.1.1]{FK}.
\end{remark}

\begin{question}
Does the introduced structure of an ind-group on $\RB$ provide the universal action on $\K[x]$? 
\end{question}

\subsection*{Additive structure}

\begin{sit}
Given $b\in\K$ and $s\in\K[x]$ such that $s(b)=0$, we denote
\begin{align*}
h^b_s\colon \RB\to\RB,\ J_a\circ l_r\mapsto J_a\circ l_{r+r(b)s}, \\
\H^b = \{h^b_s\mid s\in\K[x], s(b)=0\}.
\end{align*}
Then $\H^b$ is a group, because $h^b_s\circ h^b_{s'}=h^b_{s+s'}$.
Further, for any $k\in\ZZ_{>0}$ we introduce the following one-parameter subgroup in $\H^b$:
\[
H^b_k =\{ h^b_{\gamma(x^k-b^k)}\mid \gamma\in \K \}.
\]
The $H^b_k$-action on $\RB$ is a $\G_a$-action.
\end{sit}

\begin{definition}
An action of an ind-group $G$ on an irreducible ind-variety $X$ is said to be of \emph{generic modality}~$k$, 
if there exists an open ind-subvariety $U\subset X$
and a morphism $U\to Z$,
where $Z$ is an algebraic variety of dimension~$k$, 
such that fibers of $\pi$ are $G$-orbits. 
\end{definition}

If $X$ is an algebraic variety, then $k=\mathrm{tr.deg.}\K(X)^G$. 
So, this definition conforms with \cite[Definition 2]{Arzh}.
The following lemma shows that this definition is correct.

\begin{lemma}
Consider the regular action of an ind-group $G$ on an irreducible ind-variety $X$.
Assume that there exist open ind-subvarieties $U,U'\subset X$ and 
morphisms to algebraic varieties $\pi\colon U\to Z$ and $\pi'\colon U'\to Z'$
such that fibers of both $\pi$ and $\pi'$ are $G$-orbits. 
Then $\dim Z=\dim Z'$. 
\end{lemma}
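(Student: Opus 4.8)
The plan is to reduce the statement to a comparison of generic fibre dimensions of two honest morphisms defined on a single finite-dimensional slice of $X$. Since $X$ is irreducible, up to equivalence I may assume that every $X_n$ in the filtration $X=\varinjlim X_n$ is irreducible, and that $U=\varinjlim U_n$ and $U'=\varinjlim U'_n$ share this filtration with $U_n,U'_n$ open in $X_n$. The intersection $W=U\cap U'$ is then a nonempty open ind-subvariety, hence irreducible, and I would first record the decisive observation: on $W$ the morphisms $\pi$ and $\pi'$ induce \emph{the same} fibre partition. Indeed, for $w,w'\in W$ the equality $\pi(w)=\pi(w')$ means exactly that $w,w'$ lie in a common fibre of $\pi$, i.e. in a common $G$-orbit; since the fibres of $\pi'$ are also $G$-orbits, this is equivalent to $\pi'(w)=\pi'(w')$. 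Thus $\pi$ and $\pi'$ define the same equivalence relation on $W$. As the notion of generic modality concerns the dimension of the quotient, I assume, as in the definition, that $\pi$ and $\pi'$ are dominant; since $W$ is dense in both $U$ and $U'$, the restrictions $\pi|_W,\pi'|_W$ are dominant as well, and $\overline{\pi(W)}=Z$, $\overline{\pi'(W)}=Z'$.

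Next I would descend to finite level. Choose $n$ with $W_n\coloneq W\cap X_n$ nonempty; then $W_n$ is irreducible, and I set $Z_n=\overline{\pi(W_n)}$, $Z'_n=\overline{\pi'(W_n)}$. As $n$ grows these form ascending chains of irreducible closed subvarieties of the Noetherian varieties $Z$ and $Z'$, so their dimensions stabilise, to $\dim\overline{\pi(W)}=\dim Z$ and $\dim\overline{\pi'(W)}=\dim Z'$ respectively. Applying the equivalence observed above to points of $W_n$, the two restricted dominant morphisms $\pi|_{W_n}\colon W_n\to Z_n$ and $\pi'|_{W_n}\colon W_n\to Z'_n$ again share the same fibre partition.

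Now I would invoke the theorem on the dimension of fibres of a dominant morphism of irreducible varieties: there is a dense open $V\subseteq Z_n$ with $\dim\pi^{-1}(z)=\dim W_n-\dim Z_n$ for every $z\in V$, and likewise a dense open $V'\subseteq Z'_n$ with $\dim\pi'^{-1}(z')=\dim W_n-\dim Z'_n$ for $z'\in V'$. The preimages $(\pi|_{W_n})^{-1}(V)$ and $(\pi'|_{W_n})^{-1}(V')$ are dense open in the irreducible $W_n$, hence meet; I pick $w$ in their intersection. Its $\pi$-fibre and its $\pi'$-fibre are the \emph{same} subset of $W_n$, yet the former has dimension $\dim W_n-\dim Z_n$ and the latter $\dim W_n-\dim Z'_n$, forcing $\dim Z_n=\dim Z'_n$. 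Letting $n\to\infty$ yields $\dim Z=\dim Z'$.

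The only genuine subtlety, and the step I would treat most carefully, is the passage from the ind-level to a finite slice: at the ind-level the common fibres are full $G$-orbits, which need not be finite-dimensional algebraic varieties, so no dimension count is available there. The resolution is precisely the remark that, once restricted to a fixed $X_n$, coincidence of fibre partitions is a purely set-theoretic consequence of ``lying in the same $G$-orbit'' being one and the same equivalence relation for $\pi$ and $\pi'$; after this reduction the classical fibre-dimension theorem does all the work.
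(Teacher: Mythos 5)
Your proof is correct and follows essentially the same route as the paper's: restrict to the nonempty open intersection $W=U\cap U'$, pass to a finite-dimensional piece $W_n$ whose images are dense in $Z$ and $Z'$, and compare generic fibre dimensions. You merely make explicit two points the paper leaves implicit — that the fibre partitions of $\pi$ and $\pi'$ coincide because both are traces of $G$-orbits, and why the images $\pi(W_n)$, $\pi'(W_n)$ become dense for large $n$ — which is a faithful elaboration rather than a different argument.
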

\begin{proof}
Since $X$ is irreducible, the intersection $W=U\cap U'$ is a~non-empty open ind-subvariety in $X$, as well as in both $U$ and $U'$.
Consider the restrictions of $\pi$ and $\pi'$ on $W$.
Their images are dense subsets $\pi(W)$ in $Z$ and $\pi'(W)$ in $Z'$ respectively. 
Let $X=\varinjlim X_n$, then $W=\varinjlim W_n$, where $W_n=X_n\cap W$ for each $n\in\NN$.
So, for some $n\in\NN$ the images $\pi(W_n)$ and $\pi'(W_n)$ are dense in $Z$ and $Z'$ respectively.
Since the generic fibers of $\pi|_{W_n}$ and $\pi'|_{W_n}$ have the same dimension, the assertion follows.
\end{proof}

\begin{definition}
By an \emph{additive structure of generic modality} $k$ on an ind-variety $X$ we mean an action of an abelian unipotent ind-group $G$ on $X$ of generic modality $k$.
\end{definition}

\begin{proposition}\label{pr:Ha}
Let $a,b\in\K$.
\begin{enumerate}[label=(\roman*)]
\item The group $\H^b$ is an abelian unipotent ind-group. Namely,
\[
\H^b 
 = \varinjlim_k  (H^b_1\times\ldots \times H^b_k) \cong \varinjlim_k (\G_a)^k.
\]
\item Consider the evaluation map $\pi_b\colon\RB^a\to \K[x]/(b)\cong\K$, $J_a\circ l_r \mapsto r(b)$. Then $\pi_b^{-1}(c)$ is an $\H^b$-orbit for any $c\neq0$ and $\pi_b^{-1}(c)\cap\RB_k$ is an $H_1^b\times\ldots\times H_k^b$-orbit.
Thus, $\H^b$ (resp. $H^b_k$) acts on $\RB^a$ (resp. $\RB^a_k$) with generic modality one.
\end{enumerate}
\end{proposition}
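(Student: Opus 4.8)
The plan is to identify the group $\H^b$ with a concrete vector group and then read off both assertions from the explicit formula $h^b_s\colon r\mapsto r+r(b)s$, working throughout under the identification $\RB^a\cong\K[x]\setminus\{0\}$, $J_a\circ l_r\leftrightarrow r$, from Proposition~\ref{pr:moduli}.

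\textbf{Part (i).} First I would observe that $s\mapsto h^b_s$ is a bijection between the vector space $V^b=\{s\in\K[x]\mid s(b)=0\}$ and $\H^b$: surjectivity is the definition, and injectivity follows since $h^b_s=\id$ forces $r(b)s=0$ for all $r$, hence $s=0$ upon taking $r=1$. As $h^b_s\circ h^b_{s'}=h^b_{s+s'}$ (already noted in the excerpt), this bijection is a group isomorphism onto $(V^b,+)$, so $\H^b$ is abelian. The polynomials $x^k-b^k$, $k\ge1$, form a basis of $V^b$, because any $s=\sum_{k\ge0}s_kx^k$ with $s(b)=0$ satisfies $s_0=-\sum_{k\ge1}s_kb^k$, whence $s=\sum_{k\ge1}s_k(x^k-b^k)$. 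Filtering $V^b$ by degree, the subspace $V^b_{\le k}=\Span(x-b,\ldots,x^k-b^k)$ is exactly the image of $H^b_1\times\cdots\times H^b_k$ and is isomorphic as an algebraic group to $\AA^k\cong(\G_a)^k$. Since the inclusions $V^b_{\le k}\hookrightarrow V^b_{\le k+1}$ are closed embeddings and addition and negation are morphisms, I conclude $\H^b=\varinjlim_k V^b_{\le k}=\varinjlim_k(H^b_1\times\cdots\times H^b_k)\cong\varinjlim_k(\G_a)^k$ is an abelian unipotent ind-group.

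\textbf{Part (ii).} The evaluation $\pi_b\colon r\mapsto r(b)$ is linear in the coefficients of $r$, hence restricts to a morphism on each $\RB^a_d$ and defines a morphism of ind-varieties. The heart of the argument is the pairing of invariance and transitivity showing each nonzero fiber is a single orbit. For invariance I would compute, for $s\in V^b$,
\[
\bigl(r+r(b)s\bigr)(b)=r(b)+r(b)s(b)=r(b),
\]
so every fiber $\pi_b^{-1}(c)$ is $\H^b$-stable; moreover for $c\neq0$ the value $c$ is preserved, so the action stays inside $\RB^a$. For transitivity on a fiber with $c\neq0$, given $r,r'$ with $r(b)=r'(b)=c$, I set $s=(r'-r)/c$; then $s(b)=(c-c)/c=0$, so $h^b_s\in\H^b$ and $h^b_s(r)=r+c\,s=r'$. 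Hence $\pi_b^{-1}(c)$ is a single $\H^b$-orbit for every $c\neq0$. Since this $s$ has $\deg s\le\max(\deg r,\deg r')$, when $\deg r,\deg r'\le k$ one gets $h^b_s\in H^b_1\times\cdots\times H^b_k$, and the orbit stays degree-bounded; thus $\pi_b^{-1}(c)\cap\RB_k=\pi_b^{-1}(c)\cap\RB^a_k$ is a single $H^b_1\times\cdots\times H^b_k$-orbit.

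\textbf{Generic modality one.} Finally I would take the open ind-subvariety $U=\pi_b^{-1}(\K\setminus\{0\})\subset\RB^a$ together with the morphism $\pi_b\colon U\to Z=\K\setminus\{0\}$, an irreducible algebraic variety of dimension one, whose fibers are precisely the $\H^b$-orbits established above; this yields generic modality one for $\H^b$ on $\RB^a$, and the finite-level version gives the analogous statement on $\RB^a_k$. The bookkeeping points that need care are the regularity of the action—immediate because $(s,r)\mapsto r+r(b)s$ is polynomial in all coordinates and degree-bounded on each finite piece, so it is an ind-morphism—and the genuine necessity of $c\neq0$ in the transitivity step, where one must invert $c$ to solve for $s$. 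This last point is exactly what forces the modality to be one rather than zero: the single invariant $r(b)$ is the obstruction to a single orbit, and away from its vanishing locus the group acts transitively on each level set.
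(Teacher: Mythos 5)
Your proof is correct and follows essentially the same route as the paper's: both identify $\H^b$ with the vector group of polynomials vanishing at $b$, decomposed along the basis $\{x^k-b^k\}$ into the one-parameter subgroups $H^b_k$, and both obtain part (ii) by showing each nonzero fiber of $\pi_b$ is a single orbit. You actually supply more detail than the paper, which leaves the fiber-transitivity computation (your $s=(r'-r)/c$ step) implicit.
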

\begin{proof}
For any $k\in\NN, c_1,\ldots,c_k\in\K$, $r\in\K[x]$, there holds
$$
\big(H_1^b(c_1)\circ\ldots\circ H_k^b(c_k)\big)(J_a\circ l_r) = J_a\circ l_{r'},
$$
where 
$r' = r+r(b)\sum\limits_{i=1}^k c_i(x^i-b^i)$.
The commutativity follows. Thus, 
$ H_1^b\times\ldots\times H_k^b\cong (\G_a)^k$, which is known to be unipotent. So, we have~(i). 

Consider the map $\phi^k\colon\RB^a_k\to\K$, $J_a\circ l_r\mapsto r(b)$.
Each fiber of this map, except the preimage of~0, is an 
$\big(H_1^b\times\ldots\times H_k^b\big)$-orbit.
The maps $\{\phi^k\mid k\in\ZZ_{>0}\}$ provide the ind-morphism $\RB^a\to\K$, a~generic fiber of which is an $\H^b$-orbit, hence (ii).
\end{proof}

\begin{corollary}\label{c:additive-Ra}
The ind-variety $\RB$ admits an additive structure of generic modality two.
\end{corollary}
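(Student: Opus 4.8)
The plan is to observe that the $\H^b$-action of Proposition~\ref{pr:Ha}, taken for a single fixed $b\in\K$ but now regarded on the whole moduli space $\RB$ rather than on one slice $\RB^a$, already furnishes the desired structure. By Proposition~\ref{pr:Ha}(i) the ind-group $\H^b$ is abelian and unipotent, and by Proposition~\ref{pr:moduli} the ind-variety $\RB\cong\K\times(\K[x]\setminus\{0\})$ is irreducible; so it remains only to produce a morphism from an open ind-subvariety of $\RB$ onto a two-dimensional base whose generic fibres are $\H^b$-orbits.

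First I would record that $h^b_s\colon J_a\circ l_r\mapsto J_a\circ l_{r+r(b)s}$ does not change $a$, so $\H^b$ preserves each slice $\RB^a$ and therefore acts on $\RB=\bigcup_{a\in\K}\RB^a$. Using the identification of Proposition~\ref{pr:moduli}, I define
\[
\pi\colon \RB\to\AA^2,\qquad J_a\circ l_r\mapsto\bigl(a,\,r(b)\bigr),
\]
and check that it is an ind-morphism: on each finite piece $\K\times(\K[x]_{\le d}\setminus\{0\})$ the first coordinate is the projection to $\K$, while $r\mapsto r(b)$ is a linear function of the coefficients of $r$, so $\pi$ restricts to a morphism into $\AA^2$ compatibly with the filtration.

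Next I would pass to the open ind-subvariety $U=\{J_a\circ l_r\mid r(b)\ne0\}$, which is open since $r(b)=0$ is a closed condition, and let $Z=\K\times(\K\setminus\{0\})$, an algebraic variety of dimension two; the restriction $\pi|_U$ surjects onto $Z$. The fibre of $\pi|_U$ over a point $(a,c)$ with $c\ne0$ lies entirely inside the single slice $\RB^a$, since $\pi$ records the value $a$, and hence coincides with the fibre $\pi_b^{-1}(c)\subset\RB^a$ considered in Proposition~\ref{pr:Ha}(ii). That proposition identifies this fibre with exactly one $\H^b$-orbit. Thus every fibre of $\pi|_U$ is an $\H^b$-orbit and $\dim Z=2$, which is precisely an additive structure of generic modality two.

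The genuine content is already supplied by Proposition~\ref{pr:Ha}; the only new ingredient is the extra parameter $a$, which $\H^b$ leaves fixed and which becomes the second coordinate of the base. Accordingly, the single point that needs care is the globalisation over all values of $a$, namely that the fibres of $\pi$ do not merge across different slices $\RB^a$. This is immediate, because $\H^b$ cannot alter $a$ while $a$ is itself a coordinate of $Z$, so distinct values of $a$ yield distinct, non-interacting fibres; I expect no real obstacle beyond this bookkeeping.
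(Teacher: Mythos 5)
Your proposal is correct and coincides with the paper's own argument: both fix $b$, restrict to the open ind-subvariety $U=\{J_a\circ l_r\mid r(b)\neq0\}$, and use the map $J_a\circ l_r\mapsto(a,r(b))$ onto $\K\times\K^\times$, whose fibres are $\H^b$-orbits by Proposition~\ref{pr:Ha}(ii). The extra checks you supply (that $\pi$ is an ind-morphism and that fibres do not merge across slices $\RB^a$) are correct bookkeeping that the paper leaves implicit.
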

\begin{proof}
Let us fix $b\in\K$ and consider an open ind-subvariety 
\[
U=\{J_a\circ l_r\mid a\in\K,\, r\in\K[x],\, r(b)\neq0\}
\]
in $\RB$.
The fibers of the map 
\[
\pi\colon U\to \K\times\K^\times,\, J_a\circ l_r\mapsto (a,r(b))
\]
are $\H^b$-orbits. The assertion follows.
\end{proof}
\subsection*{Infinite transitivity}
Here we establish a transitive action on ordered tuples of operators in $\RB^a$.
\begin{proposition}\label{pr:m-tr}
The group $\H=\langle \H^b\mid b\in\K\rangle$ acts transitively on ordered $m$-tuples of linearly independent operators in $\RB^a$ for any $m\in\NN$.
\end{proposition}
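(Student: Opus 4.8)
The plan is to recast the statement as a problem in linear algebra. Recall that $r\mapsto J_a\circ l_r$ is a linear isomorphism of $\K[x]$ onto $\RB^a\cup\{0\}$, since $J_a\circ l_r+J_a\circ l_{r'}=J_a\circ l_{r+r'}$. Under this identification the generator $h^b_s\in\H^b$ becomes the linear map $\tau_{b,s}\colon r\mapsto r+r(b)s$ on $\K[x]$, where $s(b)=0$; this is a transvection, being unipotent since $(s\otimes\mathrm{ev}_b)^2=s(b)\,s\otimes\mathrm{ev}_b=0$, with fixed hyperplane $\{r:r(b)=0\}$ and direction $s$. Here $\mathrm{ev}_b$ is the evaluation functional at $b$. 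As the operators corresponding to $r_1,\ldots,r_m$ are linearly independent exactly when the polynomials are, and $\H$ acts on an $m$-tuple diagonally through one such linear map, the proposition reduces to the claim that $\H$ acts transitively on ordered $m$-tuples of linearly independent polynomials in $\K[x]$.

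Next I would bound degrees. Given two such $m$-tuples, all the polynomials involved lie in $V:=\K[x]_{\le N}$ once $N\ge m$ is also at least every degree occurring, so that $\dim V=N+1>m$. Let $G_V\le\H$ be the subgroup generated by those $\tau_{b,s}$ with $s\in V$; each such map preserves $V$ and has determinant $1$, whence $G_V\le\SL(V)$. It therefore suffices to prove $G_V=\SL(V)$: transitivity then follows from the classical fact that $\SL(V)$ acts transitively on ordered $m$-tuples of linearly independent vectors whenever $m<\dim V$ (extend each tuple to a basis and rescale the single free completion vector to normalise the determinant).

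The heart of the argument, and the step I expect to be the main obstacle, is the identification $G_V=\SL(V)$: one must show that transvections whose fixed hyperplanes are the \emph{evaluation} hyperplanes $\{r:r(c)=0\}$ already generate all of $\SL(V)$, even though most hyperplanes of $V$ are not of this form. Here I would exploit that $\K$ is infinite. Choose $N+1$ distinct points $c_0,\ldots,c_N\in\K$ and let $e_0,\ldots,e_N\in V$ be the Lagrange basis, $e_i(c_j)=\delta_{ij}$. In the basis $(e_i)$ the functional $\mathrm{ev}_{c_i}$ is the $i$-th coordinate, so for $j\ne i$ the element $h^{c_i}_{\lambda e_j}\in\H^{c_i}$ (legitimate, as $e_j(c_i)=0$) sends $e_i\mapsto e_i+\lambda e_j$ and fixes the remaining $e_k$; this is precisely the elementary matrix $I+\lambda E_{ji}$. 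Since $\{I+\lambda E_{ji}:i\ne j,\ \lambda\in\K\}$ generate $\SL_{N+1}(\K)=\SL(V)$, we get $G_V\supseteq\SL(V)$, hence equality.

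Finally, invoking $\SL(V)$-transitivity on $m$-frames (valid because $m<\dim V$) yields $g\in G_V\subseteq\H$ carrying the first tuple to the second; unwinding the isomorphism returns the desired element of $\H$ acting on the operators, with the base case $m=1$ being transitivity of $\SL(V)$ on nonzero vectors. The only genuine inputs are the linearisation of the $\H$-action, a single use of Lagrange interpolation over the infinite field $\K$, and two standard facts about $\SL_n$.
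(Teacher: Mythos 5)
Your proof is correct, and it takes a genuinely different route from the paper's. The paper normalizes a tuple $(J_a\circ l_{r_1},\ldots,J_a\circ l_{r_m})$ by choosing $m$ points $b_1,\ldots,b_m$ with $\det(r_i(b_j))\neq 0$ and performing column operations on this $m\times m$ evaluation matrix (via elements $h^{b_k}_{\lambda s_l}$ built from Lagrange polynomials on the $b_j$), landing in an explicit set $A(b_1,\ldots,b_m\mid c_1,\ldots,c_m)$ of tuples with prescribed evaluations; it then proves separately that $\H$ acts transitively on each such set and that sets attached to disjoint point configurations overlap, and chains these moves together. You instead linearize once and for all: you confine everything to $V=\K[x]_{\le N}$ with $N\ge m$ and $N$ at least all degrees involved, observe that the generators restrict to transvections in $\SL(V)$, and show via the Lagrange basis at $N+1$ points that these restrictions realize all elementary matrices $I+\lambda E_{ji}$, hence generate $\SL(V)$; transitivity on $m$-frames with $m<\dim V$ then finishes the argument. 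Your version buys a stronger and cleaner statement (the image of $\H$ in $\GL(V)$ contains $\SL(V)$ for every truncation $V$, so all frame transitivity statements come for free), at the cost of importing the classical generation of $\SL_n$ by elementary matrices; the paper's version is self-contained and works only with $m\times m$ data, but needs the three-lemma bookkeeping with the sets $A(\cdots)$. The only cosmetic point worth tightening is that your $G_V$ is really the image in $\GL(V)$ of the subgroup of $\H$ generated by the $\tau_{b,s}$ with $s\in V$, rather than a subgroup of $\H$ itself; since each generator of $\SL(V)$ visibly lifts to an element $h^{c_i}_{\lambda e_j}\in\H$, any $g\in\SL(V)$ lifts to a finite product in $\H$ acting as $g$ on $V$, which is what the proposition requires.
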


\begin{sit}
For any pairwise distinct $b_1,\ldots,b_m\in\K $ and arbitrary $c_1,\ldots$, $c_m\in\K^\times$ we define a set
$$
A(b_1,\ldots,b_m\mid c_1,\ldots,c_m)
 = \left\{
(J_a\circ r_1,\ldots,J_a\circ r_m)\mid
r_i\in\K[x]\setminus\{0\},\ 
r_i(b_j)= \delta_{ij}c_i\right\},
$$
where $\delta_{ij} = \begin{cases} 1, & i=j, \\
0, & i\neq j. \end{cases}$
\end{sit}

\begin{lemma}\label{l:m-tr-1}
Any $m$-tuple of linearly independent operators in $\RB^a$ can be sent to some $A(b_1,\ldots,b_m\mid c_1,\ldots,c_m)$ by an element of $\H$.
\end{lemma}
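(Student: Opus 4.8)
The plan is to prove Lemma~\ref{l:m-tr-1} by showing that for any $m$-tuple of linearly independent operators $(J_a\circ l_{r_1},\ldots,J_a\circ l_{r_m})$ in $\RB^a$ we can choose pairwise distinct points $b_1,\ldots,b_m\in\K$ and then apply elements of the groups $\H^{b_j}$ to normalise the polynomials $r_i$ so that $r_i(b_j)=\delta_{ij}c_i$ for suitable nonzero constants $c_i$. The key observation is that an element $h^b_s\in\H^b$ sends $J_a\circ l_r$ to $J_a\circ l_{r+r(b)s}$ with $s(b)=0$; crucially, this operation \emph{preserves the value} $r(b)$ (since $s(b)=0$), while it can freely modify the values $r(b')$ at other points $b'$ by adding $r(b)s(b')$. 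This gives us precise control: using $\H^{b_j}$ we can adjust the values of all the polynomials at points other than $b_j$ without disturbing their values at $b_j$ itself.

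First I would use linear independence of the operators to select the points $b_1,\ldots,b_m$. Since the $r_i$ are linearly independent as polynomials (the map $r\mapsto J_a\circ l_r$ is injective and linear up to the fixed $J_a$), the evaluation vectors $(r_i(b))_i$ cannot all be proportional, and by a Vandermonde-type argument one can find pairwise distinct points $b_1,\ldots,b_m$ such that the $m\times m$ matrix $\big(r_i(b_j)\big)_{i,j}$ is invertible. In particular, after possibly reindexing, we may arrange that $r_i(b_i)\neq0$ for each $i$. Set $c_i\coloneq r_i(b_i)$, which lie in $\K^\times$.

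Next I would clear the off-diagonal entries. The goal is to make $r_i(b_j)=0$ for $i\neq j$ while keeping $r_i(b_i)=c_i$. I would process the points one at a time: to kill the unwanted values of $r_1,\ldots,r_m$ at a fixed point $b_j$ (for those indices $i\neq j$ where $r_i(b_j)\neq0$), I apply an element of $\H^{b_j}$. Concretely, an element $h^{b_j}_s$ with $s(b_j)=0$ sends each $r_i$ to $r_i+r_i(b_j)s$; its effect at another point $b_\ell$ is to add $r_i(b_j)s(b_\ell)$. By choosing $s$ to be a polynomial vanishing at $b_j$ and at all the other relevant points except where a correction is needed, one adjusts the values at $b_j$. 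The cleanest route is to fix, for each $j$, the interpolation data: since $\H^{b_j}$ can add to each $r_i$ any multiple $r_i(b_j)\cdot s$ of a single polynomial $s$ vanishing at $b_j$, and since we may perform these operations simultaneously across a basis of such $s$, the accessible moves at $b_j$ let us set $r_i(b_j)$ (for $i\neq j$) to zero while the value at $b_j$ of the operator indexed by $j$ is left untouched. Because operations in $\H^{b_j}$ preserve all values $r_i(b_j)$, processing the points $b_1,\ldots,b_m$ in sequence does not interfere across different $j$: each stage fixes the column $j$ of the value matrix and leaves the diagonal entry $c_j=r_j(b_j)$ alone.

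The main obstacle I anticipate is the bookkeeping in this simultaneous normalisation, specifically confirming that the moves at different points $b_j$ genuinely commute in their effect on the value matrix, so that clearing column $j$ does not reintroduce nonzero entries in an already-cleared column $\ell$. This works precisely because $h^{b_\ell}_s$ leaves $r_i(b_\ell)$ fixed for all $i$ (as $s(b_\ell)=0$), so once column $\ell$ has been set to $c_\ell e_\ell$ it is stable under all subsequent operations from $\H^{b_j}$, $j\neq\ell$. A secondary technical point is to verify that the resulting polynomials $r_i$ remain nonzero and that the tuple lands exactly in $A(b_1,\ldots,b_m\mid c_1,\ldots,c_m)$; nonvanishing is immediate since $r_i(b_i)=c_i\neq0$. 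Assembling the elements used at each point into a single element of $\H=\langle\H^b\mid b\in\K\rangle$ then yields the desired transformation, completing the proof.
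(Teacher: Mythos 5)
Your key observation about how $h^b_s$ acts is stated correctly at the outset --- it preserves $r(b)$ and can only modify values at points $b'\neq b$ --- but the elimination step then applies it backwards. You propose to ``kill the unwanted values of $r_1,\ldots,r_m$ at a fixed point $b_j$'' by applying an element of $\H^{b_j}$; this is impossible, since every element of $\H^{b_j}$ fixes \emph{all} the values $r_i(b_j)$, $i=1,\ldots,m$ (you even note this yourself when arguing non-interference, which makes the proposal internally contradictory). Moreover, the claim that ``the accessible moves at $b_j$ let us set $r_i(b_j)$ (for $i\neq j$) to zero while the value indexed by $j$ is left untouched'' has no mechanism behind it: an element of $\H^{b_k}$ changes the values at a point $b_\ell\neq b_k$ only by adding $\lambda\, r_i(b_k)$ to $r_i(b_\ell)$ for all $i$ simultaneously, i.e.\ by adding a scalar multiple of the $k$th column of the matrix $\bigl(r_i(b_j)\bigr)$ to the $\ell$th column. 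One cannot adjust individual entries independently; only these column transvections are available.

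The paper's proof is built precisely on this constraint: taking $s_l=\prod_{j\neq l}\frac{x-b_j}{b_l-b_j}$, the element $h^{b_k}_{\lambda s_l}$ realises the elementary operation ``add $\lambda$ times column $k$ to column $l$'' on the invertible matrix $\bigl(r_i(b_j)\bigr)$, and such transvections suffice to reduce an invertible matrix to one with a single nonzero entry in each row, after which one relabels the $b_j$ and reads off the $c_i$. Your bookkeeping also assumes the diagonal entries $c_i=r_i(b_i)$ fixed at the start survive to the end; under genuine column operations they need not, which is harmless for the statement (the $c_i$ are arbitrary nonzero scalars) but confirms that the elimination you describe is not the one the group permits. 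The first step --- existence of pairwise distinct $b_1,\ldots,b_m$ with $\det\bigl(r_i(b_j)\bigr)\neq0$ --- is fine and matches the paper.
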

\begin{proof}
Consider such an $m$-tuple $(R_1,\ldots,R_m)$
and let $R_i=J_a\circ l_{r_i}$ for some $r_i\in\K[x]$, where $i=1,\ldots,m$. 
Then $r_1,\ldots,r_m$ are linearly independent.
There exist $b_1,\ldots,b_m$ such that $\det((r_i(b_j)))\neq0$.

Given $k,l\in\{1,\ldots,m\}$ such that $k\neq l$, let us denote $s_l=\prod_{j\neq l}\frac{x-b_j}{b_l-b_j}$ and consider the element $h^{b_k}_{\lambda s_l}\in\H^{b_k}$. 
Applying it to $R_1,\ldots,R_m$, we obtain an elementary transformation of the matrix $(r_i(b_j))$: the $k$th column is added to the $l$th one with the coefficient $\lambda$.
Using such transformations for suitable $k,l$, and $\lambda$, we may leave just one nonzero value at each row of $(r_i(b_j))$.
The assertion follows.
\end{proof}
\begin{lemma}\label{l:m-tr-2}
The group $\H$ acts on $A(b_1,\ldots,b_m\mid c_1,\ldots,c_m)$ transitively.
\end{lemma}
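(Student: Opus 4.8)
The plan is to reduce transitivity to the observation that, on a tuple lying in $A(b_1,\ldots,b_m\mid c_1,\ldots,c_m)$, a well-chosen generator $h^{b_i}_s\in\H^{b_i}$ modifies \emph{only} the $i$-th entry. Recall that $h^b_s$ sends $J_a\circ l_\rho$ to $J_a\circ l_{\rho+\rho(b)s}$, so applying $h^{b_i}_s$ to $(J_a\circ l_{\rho_1},\ldots,J_a\circ l_{\rho_m})$ replaces each $\rho_l$ by $\rho_l+\rho_l(b_i)s$. For a tuple in $A(b_1,\ldots,b_m\mid c_1,\ldots,c_m)$ one has $\rho_l(b_i)=\delta_{li}c_i$; thus every entry with $l\neq i$ is fixed, while the $i$-th entry becomes $\rho_i+c_is$. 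If in addition $s(b_j)=0$ for all $j$, then $(\rho_i+c_is)(b_j)=\rho_i(b_j)=\delta_{ij}c_i$, so the new tuple again lies in $A(b_1,\ldots,b_m\mid c_1,\ldots,c_m)$. This decoupling is the crux of the argument.

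Next I would compare two arbitrary tuples $(J_a\circ l_{r_i})_{i=1}^m$ and $(J_a\circ l_{r_i'})_{i=1}^m$ of the set. For each fixed $i$, both $r_i$ and $r_i'$ take the value $\delta_{ij}c_i$ at $b_j$, so their difference $r_i'-r_i$ vanishes at every $b_1,\ldots,b_m$. Setting $s_i=(r_i'-r_i)/c_i$ therefore gives a polynomial vanishing at all the $b_j$ --- in particular $s_i(b_i)=0$, so $h^{b_i}_{s_i}$ is a legitimate element of $\H^{b_i}\subset\H$ --- and $r_i+c_is_i=r_i'$.

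Finally I would compose $h^{b_1}_{s_1},\ldots,h^{b_m}_{s_m}$ in this order. By the first paragraph each step keeps the tuple inside $A(b_1,\ldots,b_m\mid c_1,\ldots,c_m)$, and the $i$-th step replaces the current $i$-th entry by $r_i'$ while leaving the other entries fixed. Since the operators $h^{b_1}_{s_1},\ldots,h^{b_{i-1}}_{s_{i-1}}$ touch only entries $1,\ldots,i-1$, the $i$-th entry still equals $r_i$ when $h^{b_i}_{s_i}$ is applied, so it is correctly sent to $r_i'$; and since any already-corrected entry $r_l'$ with $l<i$ satisfies $r_l'(b_i)=0$, it is left untouched by $h^{b_i}_{s_i}$. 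After all $m$ steps the first tuple is carried to the second, which establishes transitivity. I do not anticipate a genuine obstacle here: the only point needing care is bookkeeping the order of the corrections, which the vanishing relations $r_l'(b_i)=0$ for $l\neq i$ handle automatically.
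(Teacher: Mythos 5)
Your proof is correct and follows essentially the same route as the paper's: correct the entries one at a time using $h^{b_i}_{s_i}$ with $s_i=(r_i'-r_i)/c_i$, observing that the vanishing conditions $r_l(b_i)=0$ for $l\neq i$ make each such element fix all other entries. Your bookkeeping of why the intermediate tuples stay in $A(b_1,\ldots,b_m\mid c_1,\ldots,c_m)$ is slightly more explicit than the paper's, but the argument is the same.
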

\begin{proof}
Let $(R_1,\ldots,R_m),(R_1^\prime,\ldots,R_m^\prime)\in A(b_1,\ldots,b_m\mid c_1,\ldots,c_m).$
We have $R_1=J_a\circ l_r$ and $R_1^\prime=J_a\circ l_{r'}$ for some $r,r'\in\K[x]$ such that $r(b_1)=r'(b_1)=c_1$.
Then 
$$
r=q\prod_{i=2}^m(x-b_i),\quad r'=q'\prod_{i=2}^m(x-b_i)
$$ 
for some $q,q'\in\K[x]$. 
Thus, $h^{b_1}_s$ for $s=\frac{r'-r}{c_1}$ sends $(R_1,R_2,\ldots,R_m)$ to $(R_1^\prime,R_2,\ldots,R_m)$. 

Acting in a similar way successively for $R_2,\ldots, R_m$, we may send $(R_1,\ldots,R_m)$ to $(R_1^\prime,\ldots,R_m^\prime)$.
\end{proof}

\begin{lemma}\label{l:m-tr-3}
The sets $A(b_1,\ldots,b_m\mid c_1,\ldots,c_m)$ and $A(b_1^\prime,\ldots,b_m^\prime\mid c_1^\prime,\ldots,c_m^\prime)$ have nonempty intersection if the sets $\{b_1,\ldots,b_m\}$ and $\{b_1^\prime,\ldots,b_m^\prime\}$ are disjoint.
\end{lemma}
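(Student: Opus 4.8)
The plan is to directly exhibit a single $m$-tuple of operators lying in both sets, by constructing the defining polynomials $r_1,\ldots,r_m$ through interpolation. Since every operator in sight has the form $J_a\circ l_{r_i}$ with the same fixed $a$ inherited from $\RB^a$, membership of a tuple in a set $A(\cdots)$ is governed entirely by the values of the $r_i$ at the prescribed nodes. Thus it suffices to produce nonzero polynomials $r_1,\ldots,r_m$ with
\[
r_i(b_j)=\delta_{ij}c_i \qquad\text{and}\qquad r_i(b_j')=\delta_{ij}c_i' \qquad (1\le i,j\le m);
\]
the tuple $(J_a\circ l_{r_1},\ldots,J_a\circ l_{r_m})$ then lies in $A(b_1,\ldots,b_m\mid c_1,\ldots,c_m)\cap A(b_1',\ldots,b_m'\mid c_1',\ldots,c_m')$, which is exactly what the lemma asserts.

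First I would note that the disjointness hypothesis, together with the pairwise distinctness of the $b_j$ and of the $b_j'$ already built into the definition of the sets $A(\cdots)$, guarantees that the $2m$ scalars $b_1,\ldots,b_m,b_1',\ldots,b_m'$ are pairwise distinct. For each fixed index $i$ the displayed conditions prescribe the value of $r_i$ at every one of these $2m$ distinct nodes: the value $c_i$ at $b_i$, the value $c_i'$ at $b_i'$, and the value $0$ at the remaining $2m-2$ nodes. By Lagrange interpolation such an $r_i$ exists and has degree at most $2m-1$; explicitly one may take
\[
r_i(x)=P_i(x)\left(\frac{c_i}{P_i(b_i)}\cdot\frac{x-b_i'}{b_i-b_i'}+\frac{c_i'}{P_i(b_i')}\cdot\frac{x-b_i}{b_i'-b_i}\right),\qquad P_i(x)=\prod_{j\neq i}(x-b_j)\prod_{j\neq i}(x-b_j'),
\]
where $P_i(b_i)$, $P_i(b_i')$ and $b_i-b_i'$ are all nonzero precisely because the $2m$ nodes are distinct.

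It then remains only to confirm that each $r_i$ is a genuine element of $\K[x]\setminus\{0\}$, which is immediate from $r_i(b_i)=c_i\neq0$ since $c_i\in\K^\times$. A routine evaluation of the formula verifies the two families of equalities. There is no serious obstacle here: the entire content of the lemma is encoded in two elementary observations, namely that the disjointness hypothesis forces all $2m$ interpolation nodes to be distinct (so that the interpolation problem is unobstructed), and that $c_i\neq0$ forces the interpolant to be nonzero. The only point genuinely requiring care is the bookkeeping of these distinctness and non-vanishing conditions.
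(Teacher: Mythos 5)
Your proposal is correct and takes essentially the same approach as the paper: both exhibit a common tuple by interpolating polynomials $r_i$ with $r_i(b_j)=\delta_{ij}c_i$ and $r_i(b_j')=\delta_{ij}c_i'$, which is possible since the disjointness hypothesis makes all $2m$ nodes distinct. You merely spell out the explicit Lagrange formula and the non-vanishing check that the paper leaves implicit.
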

\begin{proof}
It is enough to take $r_k$ such that $r_k(b_i)=r_k(b_i^\prime)=0$ for $i\neq k$ and $r_k(b_k)=c_k$, $r_k(b_k^\prime)=c_k^\prime$ for each $k=1,\ldots,m$.
Then the tuple $(J_a\circ l_{r_1},\ldots,J_a\circ l_{r_m})$ belongs to both $A(b_1,\ldots,b_m\mid c_1,\ldots,c_m)$ and $A(b_1^\prime,\ldots,b_m^\prime\mid c_1^\prime,\ldots,c_m^\prime)$.
\end{proof}
\begin{proof}[Proof of Proposition~\ref{pr:m-tr}]
Let $R=(R_1,\ldots,R_m)$ be an $m$-tuple of linearly independent operators in $\RB^b$. 
By Lemma~\ref{l:m-tr-1}, we may send it to some tuple $R'$ in $A_1=A(b_1,\ldots,b_m\mid c_1,\ldots,c_m)$. 
Let us take a set $\{b_1^\prime,\ldots,b_m^\prime\}$ disjoint with both $\{b_1,\ldots,b_m\}$ and $\{1,\ldots,m\}$. 
By Lemmas \ref{l:m-tr-2} and \ref{l:m-tr-3}, we may send $R'$ to some tuple $R''\in A(b_1^\prime,\ldots,b_m^\prime\mid 1,\ldots,1)$ and then to a tuple $R^{(3)}\in A(1,\ldots,m\mid 1,\ldots,1)$. 
Thus, any tuple can be sent to the set $A(1,\ldots,m\mid 1,\ldots,1)$, which enjoys the transitive $\H$-action.
Proposition is proved.
\end{proof}

\begin{remark}\label{rm:Ha-lin}
The action of the group $\H$ is linear.
In particular, if a~nontrivial linear combination of operators $R_1,\ldots,R_m$ in $\RB^a$ is zero, then the same linear combination of $h.R_1,\ldots,h.R_m$ is zero for any element $h\in\H$ as well.
\end{remark}

\begin{definition}
A group $G$ is said to act on a set $S$ \emph{infinitely transitively} if it acts transitively on the set of ordered $m$-tuples of pairwise distinct points in $S$ for any $m\in\NN$.
\end{definition}

\begin{sit}
For each $a\in\K$ and $s\in\K[x]$ such that $s(a)=0$ we introduce the linear operator
\[
h^{b,2}_s\colon \RB\to\RB,\; J_a\circ l_r\mapsto J_a\circ l_{r+r(b)^2s}.
\]
\end{sit}

\begin{theorem}\label{th:inf-tr}
The group generated by $\G_a$-actions $\big\{h^{b,2}_s,h^b_s\mid b\in\K,\,s\in\K[x],\,s(b)=0\big\}$ acts on $\RB^a$ infinitely transitively for any $a\in\K$.
\end{theorem}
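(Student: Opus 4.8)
Let $G$ denote the group generated by the listed $\G_a$-actions; since $\H\subseteq G$, Proposition~\ref{pr:m-tr} shows that $G$ acts transitively on \emph{linearly independent} $m$-tuples in $\RB^a$. The plan is to reduce the pairwise-distinct case to this one by proving a rank-lowering lemma: every pairwise distinct $m$-tuple in $\RB^a$ can be carried by $G$ to a linearly independent one. Granting this, any two pairwise distinct tuples $R,R'$ are first made linearly independent by elements $g,g'\in G$, then joined by some $h\in G$ through Proposition~\ref{pr:m-tr}, so that $(g')^{-1}hg$ sends $R$ to $R'$. Since every generator is a bijection of $\RB^a$ fixing $a$, pairwise distinctness is preserved automatically, and only the linear-algebraic position of the tuple $(r_1,\dots,r_m)$, where $R_i=J_a\circ l_{r_i}$, will matter.

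Write $V=\Span(r_1,\dots,r_m)$ and let $K=\{\lambda\in\K^m\mid\sum_i\lambda_i r_i=0\}$ be the relation space, whose codimension is the rank of the tuple. The linear generators $h^b_s$ apply one and the same invertible linear map to every $r_i$ (Remark~\ref{rm:Ha-lin}), hence fix $K$ and cannot change the rank; the rank must therefore be lowered by the \emph{nonlinear} generator $h^{b,2}_s$. Choosing $s\notin V$ with $s(b)=0$ (say $s=(x-b)x^{N}$ for $N$ large), the move $h^{b,2}_s$ replaces each $r_i$ by $r_i+r_i(b)^2 s$; as $s\notin V$, a relation $\sum_i\mu_i\big(r_i+r_i(b)^2s\big)=0$ forces simultaneously $\sum_i\mu_i r_i=0$ and $\sum_i\mu_i r_i(b)^2=0$. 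Thus the new relation space is $K\cap\{\mu\mid\sum_i\mu_i r_i(b)^2=0\}$, which is strictly smaller than $K$ precisely when some $\lambda\in K$ satisfies $\sum_i\lambda_i r_i(b)^2\ne0$. A finite iteration of such moves then brings $\dim K$ down to $0$, i.e. to linear independence.

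Everything therefore rests on producing, at each stage with $K\ne0$, some $\lambda\in K$ and point $b$ with $\sum_i\lambda_i r_i(b)^2\ne0$. It is convenient to pass to the two-point form $\Phi_\lambda(x,y)=\sum_i\lambda_i r_i(x)r_i(y)$, noting that $\sum_i\lambda_i r_i(x)^2=\Phi_\lambda(x,x)$. I claim it suffices to find one $\lambda\in K$ with $\Phi_\lambda\not\equiv0$. Indeed, if already $\Phi_\lambda(x,x)\not\equiv0$, a generic $b$ does the job and $h^{b,2}_s$ lowers the rank. Otherwise $\Phi_\lambda$ vanishes on the diagonal, and a preliminary linear move $h^c_t$ (generic $c$, with $t\notin V$, $t(c)=0$), which does not change $K$, replaces $r_i$ by $\hat r_i=r_i+r_i(c)t$ and yields $\sum_i\lambda_i\hat r_i(x)^2=2\,t(x)\,\Phi_\lambda(c,x)$, a nonzero polynomial for generic $c$; the quadratic move then applies to the moved tuple.

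The heart of the argument, and the step I expect to be the main obstacle, is that the degenerate alternative $\Phi_\lambda\equiv0$ for \emph{all} $\lambda\in K$ is incompatible with the $r_i$ being pairwise distinct. To see this, I would choose $\lambda\in K\setminus\{0\}$ of minimal support $\{1,\dots,p\}$; then $r_1,\dots,r_{p-1}$ are linearly independent, $r_p=\sum_{i<p}a_i r_i$ with all $a_i\ne0$, and $\lambda$ is, up to scale, the unique relation among $r_1,\dots,r_p$. Fixing $D\ge\max_i\deg r_i$ and writing $\bar r_i\in\K^{D+1}$ for the coefficient vector of $r_i$, the identity $\Phi_\lambda\equiv0$ reads $\sum_{i\le p}\lambda_i\,\bar r_i\bar r_i^{\mathrm T}=0$. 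The rank-one matrices $\bar r_i\bar r_i^{\mathrm T}$ with $i<p$ are linearly independent (as the $\bar r_i$ are), so $\sum_{i<p}\lambda_i\,\bar r_i\bar r_i^{\mathrm T}$ has rank $p-1$; but it equals $-\lambda_p\,\bar r_p\bar r_p^{\mathrm T}$, of rank at most $1$. Hence $p\le2$, and for $p=2$ the relation together with $\Phi_\lambda\equiv0$ forces $a_1=1$, i.e. $r_1=r_2$ — contradicting distinctness. This establishes the rank-lowering lemma and completes the reduction.
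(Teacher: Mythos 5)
Your proof is correct, but it follows a genuinely different route from the paper's. The paper proceeds by induction on $m$: it uses the inductive hypothesis to normalise $R_1,\dots,R_{m-1}$ to $J_a\circ l_1,\dots,J_a\circ l_{x^{m-2}}$, and then runs a case analysis on the coefficients of the remaining $r$ (the cases $r_0\notin\{0,1\}$, $r_0=0$, $r_0=1$, followed by a permutation trick to reduce to $r=1+x+\dots+x^{m-2}$ and a single explicit application of $h^{1,2}_{x^m}$). You instead prove the reduction to Proposition~\ref{pr:m-tr} directly, without induction, via a rank-lowering argument on the relation space $K$: you correctly identify that the linear generators preserve $K$, compute that $h^{b,2}_s$ with $s\notin V$ cuts $K$ down to $K\cap\{\mu\mid\sum_i\mu_i r_i(b)^2=0\}$, and handle the only possible obstruction — $\Phi_\lambda\equiv0$ for a minimal-support relation $\lambda$ — by the rank comparison $\sum_{i<p}\lambda_i\bar r_i\bar r_i^{\mathrm T}=-\lambda_p\bar r_p\bar r_p^{\mathrm T}$, which forces $p=2$ and then $r_1=r_2$, contradicting distinctness; the intermediate case $\Phi_\lambda(x,x)\equiv0$ but $\Phi_\lambda\not\equiv0$ is correctly repaired by a preliminary linear move (the polarization identity $\sum_i\lambda_i\hat r_i(x)^2=2t(x)\Phi_\lambda(c,x)$ checks out). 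I verified the key steps: the generators are bijections of $\RB^a$ preserving distinctness of the $r_i$ (via Proposition~\ref{pr:moduli}), the relation-space computation is valid because $s\notin V$ splits the relation into its two components, and the rank-one matrices $\bar r_i\bar r_i^{\mathrm T}$ are independent when the $\bar r_i$ are. What your approach buys is a cleaner, coordinate-free argument that avoids both the induction and the paper's somewhat delicate case analysis (in particular the "open condition" step in the $r_0=0$ case); what the paper's approach buys is explicitness — it exhibits concrete group elements ($h^{0,2}_{x^m}$, $h^{1,2}_{x^m}$) performing the reduction.
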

\begin{proof}
Let us prove by induction by $m$ that for any $m\in\NN$ the introduced group acts transitively on $m$-tuples of pairwise distinct operators in $\RB$. 
If $m=1$, then the assertion follows from Proposition~\ref{pr:m-tr}. 
Assume that the assertion holds for $(m-1)$-tuples of operators. 

Using Proposition~\ref{pr:m-tr}, it is enough to prove that any $m$-tuple $(R_1,\ldots,R_m)$ of pairwise distinct operators can be sent to an $m$-tuple of linearly independent ones. 
By induction, we may send $R_1,\ldots$, $R_{m-1}$ to $J_a\circ l_1,J_a\circ
l_x,\ldots,J_a\circ l_{x^{m-2}}$ respectively. So, we assume that $R_i=J_a\circ
l_{x^{i-1}}$ for $i=1,\ldots,m-1$.
If $R_m$ is linearly independent with $R_1,\ldots,R_{m-1}$, we are done by Proposition~\ref{pr:m-tr}.
Otherwise, we have $R_m=J_a\circ l_r$ for some $r=\sum_{i=0}^{m-2} r_ix^i\in\K[x]_{\le m-2}$.

The group $\H^0$ fixes $R_2,\ldots,R_{m-1}$. 
The element $h^{0,2}_{x^m}$ sends $R_1$ and $R_m$ to $J_a\circ l_{1+x^m}$ and $J_a\circ l_{r+r_0^2x^m}$ respectively. If $r_0\notin\{0,1\}$, then the images of $R_1,\ldots,R_m$ are linearly independent. 
If $r_0=0$, then by the inductive hypothesis
we may send $R_2,\ldots,R_m$ into any $(m-1)$-tuple of linearly independent operators. 
This is an open condition, hence we may keep $R_1$ linearly independent with $R_2,\ldots,R_m$ as well.
Then we are done.
Thus, we assume that $r_0=1$.

By Proposition~\ref{pr:m-tr}, we can permute operators $R_1,\ldots,R_{m-1}$. 
By Remark~\ref{rm:Ha-lin}, coefficients $r_0,\ldots,r_{m-2}$ are permuted accordingly. 
Hence we may repeat the argument above for each coefficient and assume that $r_0=r_1=\ldots=r_{m-2}=1$, so $r=1+x+\ldots+x^{m-2}$. 
Then the element $h^{1,2}_{x^m}$ sends $R_1,\ldots,R_m$ into a tuple of linear independent operators. Indeed, for the map 
\[
\varphi\colon \K[x]\to\K[x],\quad f\mapsto f+f(1)^2(x^m-1),
\] 
we have
\begin{align*}
\varphi(x^i) & = x^i + (x^m-1),\ i\in\{0,\ldots,m-2\},\\
\varphi(r) & = 1+x+\ldots+x^{m-2} + (m-1)^2(x^m-1).
\end{align*}
Thus, the images $\varphi(1),\varphi(x),\ldots,\varphi(x^{m-2}),\varphi(r)$
are linearly independent.
\end{proof}

\subsection*{Action of $\Aut(\K[x])$ and transitivity}
Here we describe the induced action of $\Aut(\K[x])$ on $\RB$.
This allows us to present a collection of $\G_a$-actions on $\RB$ 
such that the group generated by them acts transitively on $\RB$.
\begin{sit}
Consider the action of $\Aut(\K[x])$ on $\RB$ by conjugation.
Since
\[
\Aut(\K[x])=\{x\mapsto \mu x+\nu\mid \mu\in\K^\times, \nu\in\K\}\cong \G_a\rtimes\G_m,
\]
this provides a $\G_a$- and $\G_m$-action on $\RB$, which we denote by $G_a$ and $G_m$ respectively. More formally, we have
\begin{align*}
G_a\colon& \G_a\times\RB\to\RB,
\quad (\nu,J_{a}\circ l_{r(x)})\mapsto 
J_{a-\nu}\circ l_{r(x+\nu)},\\
G_m\colon& \G_m\times\RB\to\RB,
\quad(\mu,J_{a}\circ l_{r(x)})\mapsto 
J_{\frac{a}{\mu}}\circ l_{r(\mu x)}.
\end{align*}
\end{sit}

\begin{remark}
The $\Aut(\K[x])$-orbits on $\RB$ are parameterized by polynomials in $\K[x]$, which leading coefficient equals one.
Given such a~polynomial $r=x^n+\ldots\in\K[x]$, the corresponding orbit is 
\[
\{J_a\circ l_{r(\mu(x-a))}\mid a\in\K, \mu\in\K^\times\}.
\]
\end{remark}

\begin{proposition}
The action map $\Aut(\K[x])\times \RB\to\RB$ is an ind-morphism.
Moreover, an element $g\in\Aut(\K[x])$ sends each subset $\RB^a$, where $a\in\K$, to some subset $\RB^{a'}$, where $a'\in\K$. 
\end{proposition}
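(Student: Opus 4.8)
The plan is to prove the two assertions separately, both by reducing to the explicit formulas for the $\G_a$- and $\G_m$-actions given in the preceding \texttt{sit} environment. For the second assertion, I would observe that the conjugation action decomposes as $\Aut(\K[x]) \cong \G_a \rtimes \G_m$, so it suffices to check that each of $G_a$ and $G_m$ preserves the decomposition $\RB = \bigcup_{a\in\K}\RB^a$ by shifting the index $a$. Reading off the formulas, the element $\nu\in\G_a$ sends $J_a\circ l_{r(x)}$ to $J_{a-\nu}\circ l_{r(x+\nu)}$, so it maps $\RB^a$ into $\RB^{a-\nu}$; similarly $\mu\in\G_m$ sends $J_a\circ l_{r(x)}$ to $J_{a/\mu}\circ l_{r(\mu x)}$, mapping $\RB^a$ into $\RB^{a/\mu}$. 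Since a general element of $\Aut(\K[x])$ is a composition of one such $\G_a$-element and one such $\G_m$-element, it carries $\RB^a$ into $\RB^{a'}$ with $a' = (a-\nu)/\mu$ (or in the appropriate order), which is the claim.

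For the ind-morphism assertion, the strategy is to exhibit, for each $n$, a morphism of algebraic varieties on a suitable finite-dimensional approximation that is compatible with the closed embeddings, matching the definition of ind-morphism from Section 4. Using the identification $\RB \cong \K\times(\K[x]\setminus\{0\})$ from Proposition~\ref{pr:moduli} and the exhaustion $\RB = \bigcup_d \K\times(\K[x]_{\le d}\setminus\{0\})$, I would fix the degree bound $d$ and let $\Aut(\K[x])$ be presented through its own natural filtration by the parameters $(\mu,\nu)\in\K^\times\times\K$, which is already a single algebraic variety. The restricted action map then sends $\bigl((\mu,\nu),(a,r)\bigr)$ with $\deg r\le d$ to the pair recording the new base point and the transformed polynomial $r(\mu x+\nu)$ (up to the relevant shift), and this transformed polynomial again has degree at most $d$.

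The key computation is to write this transformed data as polynomials in the coordinates. Concretely, if $r=\sum_{i=0}^d r_i x^i$, then the coefficients of $r(\mu x+\nu)$ are polynomial expressions in $r_0,\ldots,r_d,\mu,\nu$, and the new base point is a rational expression in $a,\mu,\nu$ that is in fact regular on $\K^\times\times\K$ since $\mu\neq0$; this shows the map
\[
\bigl(\K^\times\times\K\bigr)\times\bigl(\K\times(\K[x]_{\le d}\setminus\{0\})\bigr)\to \K\times(\K[x]_{\le d}\setminus\{0\})
\]
is a morphism of algebraic varieties landing in the same finite level. Because these maps agree on the nested pieces for increasing $d$, they assemble into an ind-morphism, giving the first assertion.

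The main obstacle I anticipate is purely bookkeeping rather than conceptual: one must verify that the degree of $r$ is genuinely preserved (so that the image stays in the chosen level of the filtration rather than jumping to a higher one) and that $r(\mu x+\nu)\neq0$ whenever $r\neq0$, which holds because $x\mapsto \mu x+\nu$ is an automorphism and thus cannot send a nonzero polynomial to zero. A minor subtlety is choosing the correct order of composition for $\G_a$ and $\G_m$ so that the formula for $a'$ is consistent with the semidirect product structure $\G_a\rtimes\G_m$; once the conventions are fixed the verification is a direct substitution.
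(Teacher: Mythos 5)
Your proposal is correct and takes essentially the same approach as the paper, whose proof merely asserts that $g$ restricted to each finite level is an automorphism of a quasi-affine variety and that the new base point $a'$ does not depend on $r$; you simply fill in the explicit coordinate verification (degree preservation, $r(\mu x+\nu)\neq 0$, and the formula $a'=(a-\nu)/\mu$) that the paper leaves as "easy to check."
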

\begin{proof}
It is easy to check that $g$ restricted to $\RB^k$, where $k\in\ZZ_{\ge0}$, is an automorphism of a~quasi-affine variety. The first statement follows.

Assume that $g$ sends an operator $J_a\circ l_r$ to  $J_{a'}\circ l_{r'}$ for some nonzero $r,r'\in\K[x]$. Then $a'$ does not depend on $r$, and the second statement follows.
\end{proof}

\begin{proposition}\label{pr:tr}
The group $\langle G_a,\H\rangle$ acts transitively on $\RB$.
\end{proposition}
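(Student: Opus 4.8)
The plan is to split any desired transfer between two operators into a \emph{vertical} move that changes the lower index $a$, carried out by $G_a$, and a \emph{horizontal} move that changes the multiplier $r$ inside a fixed fiber $\RB^a$, carried out by $\H$. The two families of actions are complementary: $\H$ fixes $a$ but can reach any multiplier, whereas $G_a$ can adjust $a$; neither family alone suffices, so transitivity must come from their interplay.

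First I would record two preliminary facts. The group $\H$ preserves each subset $\RB^a$, since every generator $h^b_s$ fixes the lower index $a$ by definition. Moreover, specializing Proposition~\ref{pr:m-tr} to $m=1$ shows that $\H$ acts transitively on the $1$-tuples of linearly independent operators in $\RB^a$; because a single operator is linearly independent precisely when it is nonzero, and every element of $\RB^a$ has the form $J_a\circ l_r$ with $r\neq0$, this says exactly that $\H$ acts transitively on the whole fiber $\RB^a$.

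Next I would use $G_a$ to link different fibers. Given a source $R=J_a\circ l_r$ and a target $R'=J_{a'}\circ l_{r'}$, I apply the element of $G_a$ with parameter $\nu=a-a'$. By the explicit formula it sends $J_a\circ l_{r(x)}$ to $J_{a'}\circ l_{r(x+a-a')}$; the new multiplier $r(x+a-a')$ is again a nonzero polynomial, so the image lies in $\RB^{a'}$. Thus both the image and the target now sit in the single fiber $\RB^{a'}$.

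Finally, by the fiberwise transitivity of $\H$ on $\RB^{a'}$ established above, there is an element of $\H$ carrying the image to $R'$; composing it with the chosen element of $G_a$ yields an element of $\langle G_a,\H\rangle$ sending $R$ to $R'$. I do not anticipate a genuine obstacle here, as the argument is essentially the bookkeeping that $G_a$ acts transitively on the $a$-coordinate while $\H$ acts transitively within each fiber. The only point deserving care is the reduction to the $m=1$ case of Proposition~\ref{pr:m-tr}, namely observing that fiberwise transitivity is exactly the statement about $1$-tuples, rather than the a priori weaker transitivity on longer tuples.
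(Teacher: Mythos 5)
Your proof is correct and follows essentially the same route as the paper: first use $G_a$ to move both operators into a common fiber $\RB^{a'}$, then connect them within that fiber by the action of $\H$. The only difference is that you obtain the fiberwise transitivity by specializing Proposition~\ref{pr:m-tr} to $m=1$ (legitimate, since that proposition is proved earlier and independently of this one), whereas the paper rederives it directly from Proposition~\ref{pr:Ha} by first matching the values of the multipliers at a point $a'$ via a single element of $\H^{a}$ and then moving inside an $\H^{a'}$-orbit.
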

\begin{proof}
We have to prove that an element $R_1=J_{a_1}\circ l_{r_1}$ can be sent to $R_2=J_{a_2}\circ l_{r_2}$ for any $a_1,a_2\in\K$ and $r_1,r_2\in\K[x]\setminus\{0\}$. 
Since $G_a(a_1-a_2)(R_1)\in \RB^{a_2}$, we may assume that $a_1=a_2$. 

Since $r_1$ and $r_2$ are nonzero and the field $\K$ is infinite, there exist $a,a'\in\K$ which are not roots of the polynomials $r_1,r_2$.
Then for some $\gamma\in\K$ there holds $r_1(a')+\gamma r_1(a)(a'-a)=r_2(a').$ 
Thus, by Proposition~\ref{pr:Ha}, $h^a_\gamma(R_1)$ and $R_2$ belong to the same orbit of $\H^{a'}$. The statement follows.
\end{proof}

\end{document}